\documentclass[12pt]{amsart}
 
\pdfoutput=1
\usepackage[margin=1in]{geometry}
\usepackage{amsmath,amsthm,amssymb,amsrefs,bbm,color,esint,esvect,float,graphicx,mathrsfs}
\usepackage[bookmarksnumbered, colorlinks, plainpages,linkcolor=blue,anchorcolor=blue,citecolor=blue,urlcolor=blue]{hyperref}
\usepackage{todonotes} 

\usepackage[OT2,T1]{fontenc}
\DeclareSymbolFont{cyrletters}{OT2}{wncyr}{m}{n}
\DeclareMathSymbol{\Sha}{\mathalpha}{cyrletters}{"58}

\newcommand{\mc}{\mathcal}

\newcommand{\unit}{\mathbbm{1}}
\newcommand{\capac}{\text{Cap}}
\newcommand{\ra}{\rangle}
\newcommand{\la}{\langle}

\usepackage{euscript,latexsym}
\usepackage{accents}

\usepackage{epstopdf}
\AppendGraphicsExtensions{.tif}
\newcommand{\N}{\mathbb{N}}

\newcommand{\R}{\mathbb{R}}

 \newcommand{\vertiii}[1]{{\left\vert\kern-0.25ex\left\vert\kern-0.25ex\left\vert #1 
    \right\vert\kern-0.25ex\right\vert\kern-0.25ex\right\vert}}

 \newtheorem{thm}{Theorem}[section]
 \newtheorem{lemma}[thm]{Lemma}
 \newtheorem{cor}[thm]{Corollary}
 \newtheorem{prop}[thm]{Proposition}
 \newtheorem{rem}[thm]{Remark}
 \newtheorem{defin}[thm]{Definition}

 \numberwithin{equation}{section}

\theoremstyle{definition}

\begin{document}

\title[Local dyadic fractional Sobolev spaces]{Local dyadic fractional Sobolev spaces: paraproducts, commutators, and the algebra property}

\author{Valentia Fragkiadaki}
\address{Valentia Fragkiadaki, School of Mathematical Sciences and Statistics, Clemson University, Clemson, SC 29634, USA}
\email{vfragki@clemson.edu}

\author{Mishko Mitkovski}
\address{Mishko Mitkovski, School of Mathematical Sciences and Statistics, Clemson University, Clemson, SC 29634, USA}
\email{mmitkov@clemson.edu}
\thanks{M. Mitkovski's research is supported in part by National Science Foundation grant DMS \#2453810}

\author{Cody B. Stockdale}
\address{Cody B. Stockdale, School of Mathematical Sciences and Statistics, Clemson University, Clemson, SC 29634, USA}
\email{cbstock@clemson.edu}

\keywords{Fractional Sobolev spaces, dyadic paraproducts, Carleson embeddings}

\subjclass[2020]{Primary: 42B20 Secondary: 42B35, 46E35}

\begin{abstract}
We characterize the boundedness and compactness of dyadic paraproducts on local dyadic fractional Sobolev spaces, $H^s$. We apply this result to establish the algebra property for $H^s$ when $s \in (\frac{1}{2},1)$ and to deduce 
the boundedness and compactness of commutators with the Haar shift on $H^s$. Our conditions are stated in terms of new dyadic fractional $\text{BMO}^s$ and $\text{CMO}^s$ conditions involving the dyadic fractional Sobolev capacity, and our proof uses a new dyadic fractional version of the Carleson embedding theorem. \looseness=-1
\end{abstract}

\maketitle


\section{Introduction}\label{IntroductionSection}
\allowdisplaybreaks[4]

Paraproduct operators are central objects in harmonic analysis, having intrinsic importance in the subject and direct connections to PDE and the theory of Sobolev spaces. Historically, paraproducts originate from the work of Bony and Coifman--Meyer on nonlinear PDE and pseudodifferential operators from \cites{Bony1981, CM1978}. Paraproducts have since proven to be fundamental to the study of classical operators in harmonic analysis, such as Calder\'on-Zygmund operators and their commutators. Indeed, the relationship between the BMO-regularity of a symbol and the boundedness of its associated paraproduct directly corresponds to the endpoint boundedness of a Calder\'on-Zygmund operator from $L^{\infty}$ to BMO and to the boundedness of the associated commutators with multiplication by functions in BMO. 

It is a useful and common phenomenon that a continuous operator can be effectively analyzed via its dyadic counterparts -- this is especially apparent in the study of paraproducts. Given a symbol $b$, the dyadic paraproduct, $\Pi_b$, is defined by \looseness=-1
$$
    \Pi_bf = \sum_{I \in \mc D} (b,h_I) \la f \ra_I h_I,
$$
where $\mathcal{D}$ is a dyadic system and $h_I$ is the Haar function associated with an interval $I \in \mc{D}$. It is well-known that the $L^p$-boundedness of $\Pi_b$ is dictated by the membership of its symbol $b$ in dyadic BMO. Moreover, the compactness of $\Pi_b$ corresponds to $b$ belonging to CMO, the vanishing analogue of BMO introduced in \cites{S1975,CW1977}. The connections between dyadic BMO spaces and dyadic paraproducts to their continuous versions were made clear in \cites{GJ1982, M2003, H2012}.

While BMO characterizes paraproduct boundedness on $L^p$ spaces, it is no longer sufficient for boundedness on function spaces encoding smoothness, such as Sobolev spaces; instead, one must require more of the symbol. This was first observed for commutators on continuous Sobolev spaces by Coifman and Murai in \cite{CM1988}, and, more recently, for paraproducts by Di Plinio, Green, and Wick in \cites{DPGW2023, DPGW2024}.

We here address the boundedness and compactness of dyadic paraproducts on local dyadic fractional Sobolev spaces $H^s=H^s([0,1])$. Our main result is as follows. 
\begin{thm}\label{Thm: ParaproductInequality}
    Let $s \in (0,1)$ and $b \in L^1$. Then $\Pi_b$ is bounded on $H^s$ if and only if $b \in \text{BMO}^s$, and, in this case, 
    $$
        \|\Pi_b\|_{H^s\rightarrow H^s} \approx \|b\|_{\text{BMO}^s}.
    $$
    Moreover, $\Pi_b$ is compact on $H^s$ if and only if $b \in \text{CMO}^s$.
\end{thm}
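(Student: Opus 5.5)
The plan is to reduce everything to a dyadic fractional Carleson embedding, which I expect is the tool the abstract refers to. I will take the $H^s$ norm to be the Haar-coefficient norm
$$
\|f\|_{H^s}^2 \approx |\la f\ra_{[0,1]}|^2+\sum_{I\in\mc D}|I|^{-2s}|(f,h_I)|^2 .
$$
Because $\Pi_b f$ has mean zero and Haar coefficients $(b,h_I)\la f\ra_I$, this gives
$$
\|\Pi_b f\|_{H^s}^2\approx\sum_{I\in\mc D}|I|^{-2s}|(b,h_I)|^2\,|\la f\ra_I|^2 .
$$
So boundedness of $\Pi_b$ on $H^s$ is exactly the embedding $\sum_I \mu_I|\la f\ra_I|^2\lesssim\|f\|_{H^s}^2$ for the discrete measure $\mu_I=|I|^{-2s}|(b,h_I)|^2$ on the tree $\mc D$. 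The task is therefore to show that this embedding holds, with constant $\approx\|b\|_{\text{BMO}^s}^2$, if and only if
$$
\sum_{I\subseteq E}\mu_I\lesssim \capac_s(E)
$$
for every finite union $E$ of dyadic intervals. I am assuming this capacitary condition is the definition of $\text{BMO}^s$, with $\capac_s(E)=\inf\{\|f\|_{H^s}^2: f\ge 1 \text{ on } E\}$.

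\textbf{Necessity.} Fix $E$ and take a near-extremal $f$ for $\capac_s(E)$. Since $f\ge1$ on $E$, we have $\la f\ra_I\ge1$ for every $I\subseteq E$. Boundedness then gives
$$
\sum_{I\subseteq E}\mu_I\le\|\Pi_b f\|^2_{H^s}\le\|\Pi_b\|^2\capac_s(E),
$$
so $\|b\|_{\text{BMO}^s}\lesssim\|\Pi_b\|$.

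\textbf{Sufficiency (Maz'ya/Stegenga argument).} I would first reduce to $f\ge0$, either by passing to $|f|$ or by working with $\la|f|\ra_I$ and the dyadic maximal function $Mf=\sup_{I\ni x}\la |f|\ra_I$. Next, group the intervals by level sets: each $I$ with $2^k<\la|f|\ra_I\le2^{k+1}$ is contained in $E_k=\{Mf>2^k\}$, which is a disjoint union of maximal dyadic intervals. This yields
$$
\sum_I\mu_I|\la f\ra_I|^2\le\sum_k4^{k+1}\sum_{I\subseteq E_k}\mu_I\lesssim\|b\|^2_{\text{BMO}^s}\sum_k4^k\capac_s(E_k).
$$
It then remains to prove a dyadic strong-type capacitary inequality
$$
\sum_k4^k\capac_s(\{Mf>2^k\})\lesssim\|f\|^2_{H^s}.
$$
\textbf{Main obstacle.} I expect this capacitary inequality to be the hardest step. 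The first approach I would try is to build, for each $k$, a test function from the truncation $\min\{(f-2^{k-1})_+,2^{k-1}\}$, or from its dyadic analogue obtained by freezing $f$ at scale $2^k$ on the stopping intervals. The goal is to show that $2^{-k}$ times this truncation is $\gtrsim1$ on $E_k$, and that the $H^s$ norms of the truncations are almost orthogonal in $k$. For that, it should suffice to check that the Haar coefficients of the pieces on a fixed $I$ sum in $\ell^2$ to at most $|(f,h_I)|^2$ plus terms controlled by the stopping structure. If this is troublesome, I would fall back on the Kerman--Sawyer/Arcozzi--Rochberg--Sawyer tree approach: write $H^s$ on the tree as a weighted Dirichlet space and prove the capacitary inequality there. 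That setting requires only the discrete tree structure and $s\in(0,1)$.

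\textbf{Compactness.} If $b\in\text{CMO}^s$, meaning the capacitary ratio restricted to intervals of length $<2^{-N}$ tends to $0$, then the truncated symbol $b_N=\sum_{|I|\ge2^{-N}}(b,h_I)h_I$ gives a finite-rank $\Pi_{b_N}$. By the norm equivalence already proved,
$$
\|\Pi_b-\Pi_{b_N}\|=\|\Pi_{b-b_N}\|\approx\|b-b_N\|_{\text{BMO}^s}\to0,
$$
so $\Pi_b$ is compact. Conversely, if $b\notin\text{CMO}^s$, I would choose sets $E_j$ built from ever smaller intervals with $\sum_{I\subseteq E_j}\mu_I\ge\delta\capac_s(E_j)$, and take normalized capacitary functions $f_j=f_{E_j}/\capac_s(E_j)^{1/2}$. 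These tend to $0$ weakly in $H^s$, since their low-frequency coefficients vanish in the limit. However, $\|\Pi_bf_j\|_{H^s}^2\ge\delta$, which contradicts compactness.
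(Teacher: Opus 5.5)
Your reduction to the Carleson embedding, the necessity direction, the decomposition over $E_k=\{Mf>2^k\}$, and the ``if'' half of the compactness statement all match the paper. The genuine gap is the step you flag yourself. The strong-type bound $\sum_k 4^k\,\text{Cap}_s(\{Mf>2^k\})\lesssim\|f\|_{H^s}^2$ carries the whole sufficiency direction, and your sketch does not reach it.

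\begin{itemize}
\item Your truncation $\min\{(f-2^{k-1})_+,2^{k-1}\}$ is, up to a factor $2$, exactly the paper's $2^k\phi(f/2^k)$. It is the right tool for level sets of $f$ itself. But $2^{-k}$ times it is $\gtrsim 1$ only on $\{f\ge 2^k\}$, while $\{Mf>2^k\}$ is in general much larger: a pointwise truncation of $f$ cannot see averages.
\item The ``frozen'' variant, replacing $|f|$ by $\langle |f|\rangle_{I_j}$ on each stopping interval, is a Haar projection of $|f|$. So it does give $4^k\,\text{Cap}_s(E_k)\lesssim\|f\|_{H^s}^2$ for each fixed $k$. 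However, the test functions change with $k$, and nothing in your sketch makes them almost orthogonal. Truncation is nonlinear and does not act simply on Haar coefficients, so checking orthogonality coefficient by coefficient on a fixed $I$ is not a workable plan.
\item The tree-theory fallback is only a citation. It would also require comparing a tree capacity with $\text{Cap}_s$.
\end{itemize}

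The paper separates the two issues.

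\begin{itemize}
\item \emph{Capacitary inequality for an arbitrary $H^s$ function.} Proposition~\ref{Prop: Capac_Ineq} gives $\int_0^\infty t\,\text{Cap}_s(\{F\ge t\})\,dt\lesssim\|F\|_{H^s}^2$ for every $F\in H^s$. It uses your truncations $g_k=2^k\phi(|F|/2^k)$ together with the pointwise bound $\sum_k|g_k(x)-g_k(y)|^2\lesssim|F(x)-F(y)|^2$. This can be summed only because Lemma~\ref{LeftRightFormula} writes $\|F\|_{\dot H^s}^2$ as $\sum_I|I|^{-1-2s}\iint_{I_-\times I_+}|F(x)-F(y)|^2\,dx\,dy$.
\item \emph{An $H^s$ majorant of $Mf$.} With $g=(2^s-1)J^sf$ one has $f=T^sg$ and $\|g\|_{L^2}\approx\|f\|_{H^s}$. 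Lemma~\ref{Lemma: MT^s leq T^sM} gives $MT^sg\lesssim T^sMg$, so $\{Mf>t\}\subseteq\{T^sMg>ct\}$. Moreover $\|T^sMg\|_{H^s}\approx\|Mg\|_{L^2}\le\|g\|_{L^2}$. Applying the first step to $F=T^sMg$ finishes the proof.
\end{itemize}

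In effect, the maximal operator is moved onto $L^2$, where it is bounded.

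A smaller point concerns the converse compactness direction. Having small constituent intervals does not make the normalized capacitary functions weakly null: $I_0$ is itself a union of arbitrarily small dyadic intervals. That claim needs real justification, at minimum showing $\text{Cap}_s(E_j)\to 0$. The paper avoids the issue. The Haar projection onto $\mathcal{D}_N^c$ is uniformly bounded on $H^s$ and tends strongly to zero, so compactness forces $\|\Pi_b-\Pi_{b,N}\|\to 0$. Theorem~\ref{Thm:Carleson_lemma}, applied to the truncated sequence, then identifies this norm with the $\text{CMO}^s$ quantity.
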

\noindent The dyadic fractional Sobolev spaces were first introduced to analyze Schr\"odinger equations in \cite{AIMAR2013} and were further studied in \cites{DyadicSobolev, AA15, FAr2024, FAr2025} -- we adapt this definition to the local setting in Definition \ref{SobolevDefinition} below. We define the dyadic fractional spaces $\text{BMO}^s$ and $\text{CMO}^s$ in terms of a new dyadic $s$-Carleson condition in Definition \ref{BMOsCMOsDefinition} below and note that these spaces naturally extend the classical dyadic BMO and CMO spaces through their Carleson sequence formulations to $s>0$.\looseness=-1

We present two applications of Theorem \ref{Thm: ParaproductInequality}. First, we show that $H^s$ is a Banach algebra in the high-regularity case $s \in (\frac{1}{2},1)$. 
\begin{thm}\label{AlgebraCorollary}
    Let $s \in (\frac{1}{2},1)$. If $f,g \in H^s$, then $fg \in H^s$ with 
    $$
        \|fg\|_{H^s}\lesssim \|f\|_{H^s}\|g\|_{H^s}.
    $$
\end{thm}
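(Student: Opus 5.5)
The plan is to decompose the product via the standard dyadic paraproduct decomposition and then invoke Theorem \ref{Thm: ParaproductInequality}. On $[0,1]$, with $f = \langle f\rangle_{[0,1]} + \sum_I (f,h_I)h_I$ and similarly for $g$, we write
\begin{equation*}
fg = \Pi_g f + \Pi_f g + \Pi^{(3)}(f,g),
\end{equation*}
where $\Pi^{(3)}(f,g) = \sum_{I\in\mathcal D} (f,h_I)(g,h_I)h_I^2$ is the diagonal term. We have $h_I^2 = |I|^{-1}\mathbbm 1_I$, so the diagonal term is an averaging-type operator. Constants and the averages over $[0,1]$ are absorbed into these terms or handled trivially, since $\|\cdot\|_{L^2}$ is controlled by $\|\cdot\|_{H^s}$.

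First, I would show that for $s\in(\frac12,1)$ we have the embedding $H^s \subset \text{BMO}^s$ with $\|g\|_{\text{BMO}^s}\lesssim \|g\|_{H^s}$. By Theorem \ref{Thm: ParaproductInequality}, this gives $\|\Pi_g f\|_{H^s}\lesssim \|g\|_{H^s}\|f\|_{H^s}$, and symmetrically for $\Pi_f g$. The $\text{BMO}^s$ condition is an $s$-Carleson condition measured against the dyadic fractional capacity. The key fact is that for $s>\frac12$ every nonempty set (indeed a single point) has capacity bounded below by a constant $c_s>0$. This is the dyadic analogue of $H^s\subset L^\infty$ for $s>\frac12$ in one dimension: a function that is $\ge 1$ near a point must have $H^s$ norm $\gtrsim 1$, because $\sum_{k} 2^{k(1-2s)}$ converges. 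Consequently the $\text{BMO}^s$ Carleson ratio, whose denominator is the capacity of a union of intervals, is bounded by a constant times the full Carleson sum $\sum_{I} 2^{2s\cdot \text{gen}(I)}|(g,h_I)|^2$, appropriately weighted as in Definition \ref{BMOsCMOsDefinition}. That sum is exactly $\|g\|_{H^s}^2$ up to constants. I expect this capacity lower bound to be the main obstacle, since it depends on the precise definitions of capacity and $\text{BMO}^s$. I would prove it by testing: if $\phi\ge1$ on a dyadic interval $J$, then the telescoping averages $\langle\phi\rangle_{J_k}$ along the chain of ancestors climb from $\approx 1$ down to $\langle \phi\rangle_{[0,1]}$. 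Each jump is $\approx 2^{k/2}|(\phi,h_{J_k})|$, and Cauchy--Schwarz with weights $2^{-k(2s-1)}$ yields $1\lesssim \|\phi\|_{H^s}$.

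Second, for the diagonal term, $s>\frac12$ gives $|(f,h_I)|\,|I|^{-1/2}\le |\langle f\rangle_{I_-}-\langle f\rangle_{I_+}|\lesssim \|f\|_{H^s}|I|^{s-1/2}$, by the same telescoping estimate, which is essentially a Hölder-type bound. Expanding $h_I^2=|I|^{-1}\mathbbm 1_I$ in the Haar basis, the coefficient of $\Pi^{(3)}$ at $J$ is
\begin{equation*}
\sum_{I\supsetneq J}(f,h_I)(g,h_I)|I|^{-1}(\mathbbm 1_I,h_J),
\end{equation*}
and only the term $I$ equal to the parent of $J$ can be nonzero, since $(\mathbbm 1_I,h_J)=0$ whenever $J\subsetneq I$ strictly below the children level. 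Then $|(\Pi^{(3)},h_J)|\lesssim |(f,h_{\hat J})|\,|(g,h_{\hat J})|\,|J|^{-1/2}$. Bounding the $g$ factor by $\|g\|_{H^s}|J|^{s}$ via the pointwise estimate leaves $\sum_J |J|^{-2s}|(\Pi^{(3)},h_J)|^2 \lesssim \|g\|_{H^s}^2\sum_{\hat J}|(f,h_{\hat J})|^2|\hat J|^{-2s}$, which is $\lesssim \|f\|_{H^s}^2\|g\|_{H^s}^2$.

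Summing the three estimates gives $\|fg\|_{H^s}\lesssim\|f\|_{H^s}\|g\|_{H^s}$.
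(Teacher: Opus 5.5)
Your treatment of the two paraproducts matches the paper. You use the Bony decomposition, then $H^s\subseteq\text{BMO}^s$ via a uniform lower bound on the capacity of finite unions of dyadic intervals, then Theorem~\ref{Thm: ParaproductInequality}. Your telescoping argument along the ancestor chain proves that capacity bound directly, where the paper uses $H^s\subseteq L^\infty$.

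\textbf{The gap is in the diagonal term}, where you depart from the paper and the Haar-coefficient computation is backwards. Since $h_J$ has mean zero and is supported in $J$, $(\mathbbm{1}_I,h_J)=0$ whenever $J\subseteq I$, including $I=\widehat{J}$. The pairing is nonzero exactly when $I\subsetneq J$, where it equals $h_J(I)|I|$. Hence
\[
(\Pi^{(3)}(f,g),h_J)=\sum_{I\subsetneq J}(f,h_I)(g,h_I)\,h_J(I),
\]
which is a sum over all strict descendants of $J$, not a single parent term. For instance, $f=g=h_K$ gives $\Pi^{(3)}=|K|^{-1}\mathbbm{1}_K$. Its coefficient at every strict ancestor $J$ of $K$ has modulus $|J|^{-1/2}$, while your bound $|(f,h_{\widehat{J}})|\,|(g,h_{\widehat{J}})|\,|J|^{-1/2}$ vanishes there. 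So your estimate of $\sum_J|J|^{-2s}|(\Pi^{(3)},h_J)|^2$ is unsupported. The single-coefficient bound $|(g,h_I)|\le\|g\|_{\dot{H}^s}|I|^s$ does not by itself control a sum over infinitely many descendants; an extra summation argument is needed, and the proposal does not supply one.

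The repair is the argument of Proposition~\ref{AdjointParaproduct}. By Cauchy--Schwarz,
\[
|(\Pi^{(3)}(f,g),h_J)|^2\le |J|^{-1}\Big(\sum_{I\subseteq J}|I|^{2s}|(f,h_I)|^2\Big)\Big(\sum_{I\subseteq J}|I|^{-2s}|(g,h_I)|^2\Big).
\]
Bound the last factor by $\|g\|_{\dot{H}^s}^2$; in the paper's terms this is $\|g\|_{\text{BMO}^s}^2\,\text{Cap}_s(J)$ with $\text{Cap}_s(J)\lesssim 1$. Then swap the order of summation:
\[
\sum_{J}|J|^{-2s-1}\sum_{I\subseteq J}|I|^{2s}|(f,h_I)|^2=\sum_{I}|I|^{2s}|(f,h_I)|^2\sum_{J\supseteq I}|J|^{-2s-1}\approx\sum_{I}|I|^{-1}|(f,h_I)|^2\le\|f\|_{\dot{H}^s}^2 .
\]
The last step uses $2s>1$ and $|I|\le 1$.

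You also need the mean $\langle\Pi^{(3)}\rangle_{I_0}=\sum_I(f,h_I)(g,h_I)$ and the constant term $\langle f\rangle_{I_0}\langle g\rangle_{I_0}$, since the $H^s$ norm is not just the $\dot{H}^s$ part. Both are bounded by $\|f\|_{L^2}\|g\|_{L^2}$.

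A minor point: with the paper's definition, admissibility only requires $\ge 1$ almost everywhere, so a single point has capacity $0$. Your claim about points is false, but only the bound for unions of dyadic intervals is needed, and that is what your telescoping argument actually proves.
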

\noindent Theorem \ref{AlgebraCorollary} provides the local version of the main results of \cites{FAr2024,FAr2025}. 

We also apply Theorem \ref{Thm: ParaproductInequality}, as well as Theorem \ref{AlgebraCorollary}, to establish the boundedness and compactness of commutators $[b,\Sha]$ with the Haar shift, $\Sha$, given by
$$
    \Sha f = \sum_{I \in \mathcal{D}}(f,h_I) (h_{I_-}-h_{I_+}),
$$
and multiplication by the symbol $b$. The Haar shift above was first introduced by Petermichl in \cite{P2000} and used to obtain sharp bounds for the Hilbert transform on weighted Lebesgue spaces in \cite{P2007}. Recall that the commutator, $[b,\Sha]$, is given by \looseness=-1
$$
    [b,\Sha]f = b\Sha f- \Sha(bf).
$$
\begin{thm}\label{CommutatorCorollary}
    If $s \in (0,1)$ and $b \in \text{BMO}^s$, then $[b,\Sha]$ is bounded on $H^s$ with
    $$
        \|[b,\Sha]\|_{H^s\rightarrow H^s} \lesssim \|b\|_{\text{BMO}^s}.
    $$
    Moreover, if $b \in \text{CMO}^s$, then $[b,\Sha]$ is compact on $H^s$.  
\end{thm}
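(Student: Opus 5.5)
We will reduce the commutator to paraproducts via the standard decomposition of the product $bf$ in the Haar basis. For $b,f$ nice, write
$$
bf = \Pi_b f + \Pi_f b + \Pi^*_b f + \langle b\rangle_{[0,1]}\langle f\rangle_{[0,1]}\cdots,
$$
where $\Pi^*_b f=\sum_{I}(b,h_I)(f,h_I)\frac{\unit_I}{|I|}$ is the diagonal term (with the constant/mean term on $[0,1]$ handled separately, as it is harmless in the local setting). Substituting into $[b,\Sha]f=b\Sha f-\Sha(bf)$ gives
$$
[b,\Sha]f = [\Pi_b,\Sha]f + \big(\Pi^*_b\Sha f - \Sha\Pi^*_b f\big) + \big(\Pi_{\Sha f} b - \Sha \Pi_f b\big) + \text{(mean terms)}.
$$
The first group is controlled by Theorem \ref{Thm: ParaproductInequality} together with the boundedness of $\Sha$ on $H^s$. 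The latter should follow directly from Definition \ref{SobolevDefinition}, since $\Sha$ maps Haar coefficients at scale $|I|$ to scale $|I|/2$ with bounded multiplicity, so the weights $|I|^{-2s}$ change only by a factor $2^{2s}$. Hence $\|[\Pi_b,\Sha]\|\lesssim\|b\|_{\text{BMO}^s}$.

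The main obstacle is the remaining terms, $\Pi_b^*$ and $\Pi_f b$. We do not expect them to be handled by Theorem \ref{Thm: ParaproductInequality} alone. The plan is to exploit the telescoping structure of the commutator: for the Haar shift one has the standard identity that $[b,\Sha]$ equals a sum of paraproduct-type operators $\sum_I (b,h_I)(f,h_J)\,\cdot$ in which the coefficients $(b,h_I)$ pair with $f$-terms in a Carleson-type fashion. Concretely, we expect to rewrite $\Pi_{\Sha f}b-\Sha\Pi_f b$ and $\Pi^*_b\Sha-\Sha\Pi^*_b$ as finite sums of operators of the form $f\mapsto \sum_I (b,h_I)\langle f\rangle_{I'}h_{I''}$ with $I',I''$ children or parents of $I$, plus diagonal terms. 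Each such operator is a shifted paraproduct, and the $H^s$-bound for it follows from the same dyadic fractional Carleson embedding used to prove Theorem \ref{Thm: ParaproductInequality}. The key point is that the condition $b\in\text{BMO}^s$, phrased via capacity, is stable under replacing $I$ by a parent or child. The diagonal piece $\Pi^*_b$ involves the product of two Haar coefficients, and we would bound it by testing against $H^s$ with Cauchy--Schwarz, again using the Carleson embedding to control $\sum_I |(b,h_I)|^2\,|I|^{-2s}\langle|f|\rangle_I^2$-type sums. If direct control fails for $s\le\frac12$, we would instead use $\|\cdot\|_{L^\infty}$-free estimates; for $s>\frac12$ we may invoke Theorem \ref{AlgebraCorollary} to bound the multiplication operator by $b$ when $b\in H^s$ after a localization argument.

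For compactness, we approximate $b\in\text{CMO}^s$ in $\text{BMO}^s$ norm by finite Haar sums $b_N$. This should be part of the characterization of $\text{CMO}^s$ used in Theorem \ref{Thm: ParaproductInequality}. For such $b_N$, $[b_N,\Sha]$ has finite rank up to the action of $\Sha$ on finitely many scales, so it is compact. The operator-norm estimate $\|[b-b_N,\Sha]\|\lesssim\|b-b_N\|_{\text{BMO}^s}\to0$ then shows that $[b,\Sha]$ is a norm limit of compact operators, hence compact.
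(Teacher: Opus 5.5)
Your decomposition, your treatment of $[\Pi_b,\Sha]$ via Theorem~\ref{Thm: ParaproductInequality} and the boundedness of $\Sha$, and your remark that the constant term is harmless (its commutator with $\Sha$ vanishes) all agree with the paper. The diagonal term needs no new argument, since $\widetilde{\Pi}_b$ is bounded on $H^s$ by Proposition~\ref{AdjointParaproduct}.

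The gap is the one step that is genuinely new in this theorem: the term $[\Lambda_b,\Sha]f=\Pi_{\Sha f}b-\Sha(\Pi_f b)$, where $\Lambda_b f=\Pi_f b$. You only say you ``expect'' it to become a sum of shifted paraproducts. But $\Pi_{\Sha f}b$ and $\Sha(\Pi_f b)$ each involve only averages $\la b\ra_I$, not Haar coefficients of $b$. The whole content is therefore the cancellation between them, which you never exhibit. It is a short computation: $(\Sha f,h_{I_-})=(f,h_I)$, $(\Sha f,h_{I_+})=-(f,h_I)$, and $\la b\ra_{I_-}-\la b\ra_I=\la b\ra_I-\la b\ra_{I_+}=|I|^{-1/2}(b,h_I)$, so
\[
    [\Lambda_b,\Sha]f=\sum_{I\in\mc D}|I|^{-1/2}(b,h_I)(f,h_I)\,(h_{I_-}+h_{I_+}).
\]
Hence $\|[\Lambda_b,\Sha]f\|_{H^s}^2\approx\sum_{I\in\mc D}|I|^{-1-2s}|(b,h_I)|^2|(f,h_I)|^2$. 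Testing the $\text{BMO}^s$ condition on the single interval $I$ gives $|(b,h_I)|^2\le\|b\|_{\text{BMO}^s}^2|I|^{2s}\text{Cap}_s(I)$. Since $\text{Cap}_s(I)\lesssim|I|^{1-2s}$ for $s<\frac12$ (Proposition~\ref{Cap(I)_estimate}) and $\text{Cap}_s(I)\le\text{Cap}_s(I_0)\le 1$ for all $s$, we get $|I|^{-1}\text{Cap}_s(I)\lesssim|I|^{-2s}$. This yields $\|[\Lambda_b,\Sha]f\|_{H^s}\lesssim\|b\|_{\text{BMO}^s}\|f\|_{\dot{H}^s}$ with no Carleson embedding needed.

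The route you sketch instead can be completed: write $(f,h_I)=\frac{|I|^{1/2}}{2}(\la f\ra_{I_-}-\la f\ra_{I_+})$ and apply Theorem~\ref{Thm:Carleson_lemma} to the resulting shifted paraproducts. However, it discards exactly this cancellation and rests on your unproved claim that the capacitary condition is stable under passing to parents. For unions of intervals that claim is the doubling estimate $\text{Cap}_s(\bigcup_k\widehat{I_k})\lesssim\text{Cap}_s(\bigcup_k I_k)$ for pairwise disjoint $I_k$. It is true, but it is a lemma rather than a remark. For $u\in\mc A(\bigcup_k I_k)$ one has $2Mu\ge 1$ on $\bigcup_k\widehat{I_k}$. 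Writing $u=T^sg$ with $g\in L^2$, one has $Mu\le MT^s|g|\lesssim T^sM|g|$ and $\|T^sM|g|\|_{H^s}\lesssim\|g\|_{L^2}\approx\|u\|_{H^s}$, as in the proof of Theorem~\ref{Thm:Carleson_lemma}. Your fallbacks (``$L^\infty$-free estimates'', or the algebra property, which only covers $s>\frac12$) are not needed.

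Your compactness argument, by contrast, is correct and genuinely different from the paper's term-by-term tail estimates. For $b_N=\sum_{I\in\mc D_N}(b,h_I)h_I$, the quantity $\|b-b_N\|_{\text{BMO}^s}^2$ is exactly the expression in the $\text{CMO}^s$ condition of Definition~\ref{BMOsCMOsDefinition}, so $b_N\to b$ in $\text{BMO}^s$. Since $b_N$ is constant on dyadic intervals of length $2^{-N-1}$, $[b_N,\Sha]h_J=0$ whenever $|J|\le 2^{-N-1}$, so $[b_N,\Sha]$ has finite rank. The norm bound applied to $b-b_N$ then makes $[b,\Sha]$ a norm limit of finite-rank operators. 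This is cleaner than the paper's argument, but it presupposes the boundedness part, which is exactly where your proposal is incomplete.
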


Theorem \ref{Thm: ParaproductInequality} is proved through the dyadic fractional Carleson embedding of Theorem \ref{Thm:Carleson_lemma} below, which is the key to our argument. To establish this result, we derive the Maz'ya-type dyadic fractional capacitary inequality of Proposition \ref{Prop: Capac_Ineq}, which is interesting in its own right. The main difficulty in the dyadic fractional Sobolev setting is that the Lebesgue measure is no longer the appropriate notion of size. Instead, it is more natural to work with the dyadic fractional Sobolev capacity, $\text{Cap}_s$, of Definition \ref{DyadicSobolevCapacityDefinition} below, which is not additive on disjoint sets. Rather, $\text{Cap}_s$ is an outer measure on $[0,1]$ that behaves like the Lebesgue measure to a fractional power, as justified in Proposition \ref{Prop:Capacity_properties} and Proposition \ref{Cap(I)_estimate} below. \looseness=-1 

The paper is organized as follows. In Section \ref{PreliminariesSection}, we collect preliminary definitions and results on local dyadic fractional derivatives, integrals, and Sobolev spaces, and we introduce the dyadic fractional Sobolev capacity, $\text{BMO}^s$/$\text{CMO}^s$ spaces, and fractional Carleson sequences. In Section \ref{Section: Main}, we establish a dyadic fractional Carleson embedding in Theorem \ref{Thm:Carleson_lemma}, and prove our main result, Theorem \ref{Thm: ParaproductInequality}. In Section \ref{Section:Applications}, we apply Theorem 1.1 to establish the algebra property of the high-regularity Sobolev spaces, Theorem \ref{AlgebraCorollary}, and deduce the boundedness and compactness of the commutators with the Haar shift, Theorem \ref{CommutatorCorollary}.


\section{Preliminaries}\label{PreliminariesSection}
\allowdisplaybreaks[4]

We write $A\lesssim B$ if there exists an independent $C>0$ such that $A\leq CB$, and say $A\approx B$ if $A\lesssim B$ and $B \lesssim A$. We restrict our attention to the unit interval $I_0:= [0,1]$ throughout and omit this dependence from our notation, writing $L^p$ for $L^p([0,1])$, and so on. We denote $(f,g):= \int_{I_0} f(x)g(x)\,dx$ and put $\langle f\rangle_I:= |I|^{-1}\int_I f(x)\,dx$ for an interval $I\subseteq I_0$. 

\subsection{Local dyadic fractional derivatives, integrals, and Sobolev spaces}
The standard dyadic system on $I_0=[0,1]$ is the collection of intervals 
$$
    \mathcal{D} := \big\{2^{-j}([0,1)+k) \colon j \in\mathbb{N}, \, k =0,\ldots ,2^j-1\big\}.
$$
We denote the left and right children of $I \in \mathcal{D}$ by $I_{-}$ and $I_{+}$, i.e., the unique dyadic intervals contained in $I$ such that $|I_{-}| = |I_{+}| = \frac{1}{2}|I|$. We define 
$$
    \mathcal{D}(I):= \big\{J \in \mathcal{D}\colon J \subseteq I\big\} \quad \text{and} \quad \mathcal{D}_k(I) := \big\{J \in \mathcal{D}(I)\colon |J| = 2^{-k}|I|\big\}
$$
for $k \in \mathbb{N}$. Also, for $N \in \mathbb{N}$, we put 
$$
    \mathcal{D}_N := \big\{I \in \mathcal{D}\colon |I| \ge 2^{-N}\big\}
$$
and $\mathcal{D}_N^c := \mathcal{D}\setminus \mathcal{D}_N$. Note that each $\mathcal{D}_N$ is finite and $\mathcal{D} = \bigcup_{N=1}^{\infty}\mathcal{D}_N$. 

The Haar function associated with $I \in \mathcal{D}$ is given by 
$$
    h_I := |I|^{-\frac{1}{2}}(\mathbbm{1}_{I_{-}} - \mathbbm{1}_{I_{+}}).
$$
For $I,J \in \mathcal{D}$ with $I \subsetneq J$, we denote the constant value of $h_J$ on $I$ by $h_J(I)$. It is well-known that the collection $\{h_I\}_{I\in\mathcal{D}} \cup \{\unit_{I_0}\}$ forms an orthonormal basis for $L^2$, that is 
$$
    f= \sum_{I\in \mc D} (f,h_I)h_I + \la f \ra_{I_0}
$$
for any $f \in L^2$.

Given $s \in (0,1)$, the dyadic fractional derivative operator, $D^s$ is given by 
$$
    D^s f = \sum_{I \in \mathcal{D}} |I|^{-s}(f,h_I)h_I.
$$
We define the local dyadic fractional Sobolev space in terms of $D^s$ as follows. 
\begin{defin}\label{SobolevDefinition}
Let $s \in (0,1)$. The local dyadic fractional Sobolev space $H^s$ is the collection of all $f \in L^2$ such that \looseness=-1
$$
    \|f\|_{H^s}^2 := \|D^sf\|_{L^2}^2+\|f\|_{L^2}^2 < \infty,
$$
and the corresponding homogeneous space, $\dot{H}^s$, is the collection of all $f$ such that
$$
    \|f\|_{\dot{H}^s} := \|D^sf\|_{L^2}<\infty.
$$
\end{defin}
\noindent Since the orthonormal bases for our local spaces include $\mathbbm{1}_{I_0}$, it is convenient to work with the modified dyadic fractional derivative operator, $J^s$, given for $f\in L^2$ by 
$$
    J^s f = \sum_{I \in \mathcal{D}} |I|^{-s}(f,h_I)h_I + 2^{-s} \la f \ra_{I_0}.
$$
Note that, by Parseval's identity, we have  
$$
    \|f\|_{H^s}^2 = \sum_{I\in \mc D} |I|^{-2s}|(f,h_I)|^2  + \|f\|_{L^2}^2 \approx \sum_{I\in \mc D} |I|^{-2s}|(f,h_I)|^2 + \langle f \rangle_{I_0}^2 \approx\|J^sf\|_{L^2}^2
$$
for any $f \in H^s$. 

We have the following integral formula for the homogeneous seminorm.
\begin{lemma}\label{LeftRightFormula}
    If $s \in (0,1)$, then 
    $$
        \|f\|_{\dot{H}^s}^2 \approx \sum_{I \in \mathcal{D}} |I|^{-1-2s}\iint_{I_-\times I_+}|f(x)-f(y)|^2\,dxdy
    $$
    for any $f \in \dot{H}^s$. 
\end{lemma}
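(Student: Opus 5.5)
The plan is to compute the inner double integral exactly in terms of Haar coefficients, using the orthogonality of the Haar system, and then to interchange the order of summation. Fix $I\in\mathcal{D}$ and expand $f=\sum_{J\in\mathcal{D}}(f,h_J)h_J+\langle f\rangle_{I_0}$, with convergence in $L^2$; I assume, as the definition of $\dot H^s$ implicitly does, that $f\in L^2$. Restrict this expansion to $x\in I_-$ and $y\in I_+$ and sort the terms of $f(x)-f(y)$ by the position of $J$ relative to $I$:
\begin{itemize}
\item If $J\supsetneq I$, then $h_J$ is constant on $I$, so these terms cancel in $f(x)-f(y)$. The constant $\langle f\rangle_{I_0}$ cancels as well.
\item If $J$ is disjoint from $I$, the term vanishes on $I$.
\item If $J=I$, the term contributes $(f,h_I)\bigl(h_I(x)-h_I(y)\bigr)=2|I|^{-1/2}(f,h_I)$.
\item The terms with $J\subseteq I_-$ form $g_-:=\sum_{J\subseteq I_-}(f,h_J)h_J$. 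This function is supported on $I_-$, vanishes on $I_+$, and has mean zero on $I_-$. Define $g_+$ analogously.
\end{itemize}
Hence, for a.e. $(x,y)\in I_-\times I_+$,
$$
f(x)-f(y)=2|I|^{-1/2}(f,h_I)+g_-(x)-g_+(y).
$$

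Next, square this identity and integrate over $I_-\times I_+$. All cross terms vanish, because $g_-$ has mean zero on $I_-$, $g_+$ has mean zero on $I_+$, and the first summand is constant. Parseval's identity on $I_\pm$ then gives
$$
\iint_{I_-\times I_+}|f(x)-f(y)|^2\,dxdy=|I|\,|(f,h_I)|^2+\frac{|I|}{2}\sum_{J\subsetneq I}|(f,h_J)|^2 .
$$
Multiplying by $|I|^{-1-2s}$ and summing over $I\in\mathcal{D}$, the right-hand side of the lemma becomes
$$
\sum_{I\in\mathcal{D}}|I|^{-2s}|(f,h_I)|^2+\frac12\sum_{I\in\mathcal{D}}|I|^{-2s}\sum_{J\subsetneq I}|(f,h_J)|^2 .
$$
All terms are nonnegative. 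The first sum is exactly $\|f\|_{\dot H^s}^2$, which immediately yields the bound $\|f\|_{\dot H^s}^2\lesssim$ (right-hand side).

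For the reverse bound, apply Tonelli to the second sum and exchange the order of summation. For fixed $J$, the intervals $I\supsetneq J$ have lengths $2^k|J|$ with $k\ge1$, and there is at most one of each length. Therefore
$$
\sum_{I\supsetneq J}|I|^{-2s}\le |J|^{-2s}\sum_{k\ge1}2^{-2sk}=c_s|J|^{-2s},
$$
where $c_s<\infty$ precisely because $s>0$. This bounds the second sum by $\frac{c_s}{2}\|f\|_{\dot H^s}^2$, which completes the equivalence.

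The only step needing care is the first one: the pointwise restriction of the $L^2$-convergent Haar expansion and the cancellation of the coarse-scale terms. This can be justified by passing to an a.e. convergent subsequence of partial sums, or by doing the computation for finite Haar sums and passing to the limit. The rest is bookkeeping, and the geometric sum in the last step is where the hypothesis $s>0$ enters.
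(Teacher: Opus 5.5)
Your proof is correct and follows essentially the same route as the paper. Both arguments establish the exact identity $\iint_{I_-\times I_+}|f(x)-f(y)|^2\,dxdy=|I|\,|(f,h_I)|^2+\frac{|I|}{2}\sum_{J\subsetneq I}|(f,h_J)|^2$. Both then get one inequality by discarding the nonnegative oscillation terms and the other by interchanging the sums and using $\sum_{I\supsetneq J}|I|^{-2s}\lesssim|J|^{-2s}$. The only difference is cosmetic: the paper derives the identity by expanding the square, using $\int_J|f|^2=\int_J|f-\langle f\rangle_J|^2+|J|\langle f\rangle_J^2$, and applying Parseval to $(f-\langle f\rangle_J)\mathbbm{1}_J$ afterwards, whereas you read it off directly from the local Haar expansion on $I$.
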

\begin{proof}
We first prove that 
$$
    \iint_{I_-\times I_+}|f(x)-f(y)|^2\,dxdy = |I||(f,h_I)|^2 + \frac{|I|}{2}\int_{I_-} |f-\langle f\rangle_{I_-}|^2 + \frac{|I|}{2}\int_{I_+}|f-\langle f\rangle_{I_+}|^2.
$$
Expanding the square on the left-hand side, we get
\begin{align*}
    \iint_{I_-\times I_+}|f(x)-f(y)|^2\,dxdy  = &|I_-|\int_{I_-}|f(x)|^2\,dx + |I_+|\int_{I_+}|f(y)|^2\,dy \\
    & - 2 \bigg(\int_{I_-}f(x)\,dx \bigg)\bigg(\int_{I_+}f(y)\,dy \bigg).
\end{align*}
Notice that for any $J\in \mc D$ 
$$
    \int_{J}|f(x)|^2\,dx = \int_{J}|f(x) - \la f\ra_{J}|^2\,dx + |J|\la f\ra_{J}^2.
$$
Applying this with $J=I_-$ and $J=I_+$, and noticing that $|I_-|=|I_+|=\frac{|I|}{2}$, we get
\begin{align*}
    \iint_{I_-\times I_+}|f(x)-f(y)|^2\,dxdy  = &\frac{|I|}{2}\int_{I_-}|f - \la f\ra_{I_-}|^2 + \frac{|I|}{2}\int_{I_+}|f - \la f\ra_{I_+}|^2 \\
    & + \frac{|I|^2}{4} \left(\la f\ra_{I_-} - \la f\ra_{I_+}\right)^2.
\end{align*}
Finally, since 
$$
    (f,h_I)=|I|^{-1/2} \left(\int_{I_-}f -\int_{I_+}f \right) = \frac{|I|^{1/2}}{2} (\la f\ra_{I_-} - \la f\ra_{I_+}),
$$
we get that
$$
     \frac{|I|^2}{4} \left(\la f\ra_{I_-} - \la f\ra_{I_+}\right)^2 = |I||(f,h_I)|^2,
$$
which proves the identity. 

To prove the equivalence, we multiply by $|I|^{-1-2s}$ and sum over $I\in \mc D$ to get
\begin{align*}
    \sum_{I \in \mathcal{D}} |I|^{-1-2s}\iint_{I_-\times I_+}|f(x)-f(y)|^2\,dxdy \approx & \sum_{I\in \mc D}|I|^{-2s}|(f,h_I)|^2 \\
    & + \sum_{I\in \mc D} |I|^{-2s} \sum_{J\in \{I_-,I_+\}} \int_{J}|f - \la f\ra_{J}|^2 
\end{align*}
The first term is the desired $\|f\|_{\dot{H}^s}$. We estimate the second term. Since each dyadic interval appears exactly once as a child of its parent, we have
$$
     \sum_{I\in \mc D} |I|^{-2s} \sum_{J\in \{I_-,I_+\}} \int_{J}|f - \la f\ra_{J}|^2  \lesssim \sum_{J\in \mc D} |J|^{-2s} \int_{J}|f - \la f\ra_{J}|^2. 
$$
By Parseval's identity on $(f - \la f\ra_{J})\unit_J$, we have
$$
    \int_{J}|f - \la f\ra_{J}|^2 = \sum_{K\in \mathcal{D}(J)}|(f,h_K)|^2,
$$
and therefore,
\begin{align*}
    \sum_{J\in \mc D} |J|^{-2s} \int_{J}|f - \la f\ra_{J}|^2 & = \sum_{J\in \mc D} |J|^{-2s} \sum_{K\in \mathcal{D}(J)}|(f,h_K)|^2 \\
    & = \sum_{K\in \mc D} |(f,h_K)|^2 \sum_{\substack{J \in \mathcal{D}\\ J\supseteq K}} |J|^{-2s} \\
    & = \sum_{K\in \mc D} |(f,h_K)|^2 |K|^{-2s} \sum_{m=0}^\infty 2^{-2ms}\\
    & \approx \sum_{K\in \mc D} |K|^{-2s} |(f,h_K)|^2. 
\end{align*}
Combining the estimates, we get
$$
    \sum_{I \in \mathcal{D}} |I|^{-1-2s}\iint_{I_-\times I_+}|f(x)-f(y)|^2\,dxdy \lesssim \sum_{K\in \mc D} |K|^{-2s} |(f,h_K)|^2 = \|f\|_{\dot{H}^s}.
$$
The reverse inequality follows immediately from the identity by discarding the nonnegative oscillating terms, proving the result.
\end{proof}

\begin{rem}
Lemma \ref{LeftRightFormula} implies that 
$$
    \|f\|_{H^s}^2 \approx \sum_{I \in \mathcal{D}} |I|^{-1-2s}\iint_{I_-\times I_+}|f(x)-f(y)|^2\,dxdy + \|f\|_{L^2}^2.
$$
The reader should compare this to the following formula established by Aimar, Bongioanni, and G\'omez in \cite{AIMAR2013}*{p. 29}: \looseness=-1
\begin{equation*}
    \|f\|_{H^s}^2 \approx \iint_{I_0\times I_0} \frac{|f(x)-f(y)|^2}{\delta(x,y)^{1+2s}} \,dxdy \ + \|f\|_{L^2}^2,
\end{equation*}
where $\delta(x,y)$ denotes the length of the smallest dyadic interval containing both $x$ and $y$. We emphasize that Lemma \ref{LeftRightFormula} gives an explicit formula for $\|f\|_{\dot{H}^s}$, not only the full norm $\|f\|_{H^s}$. 
\end{rem}

By an application of Lemma \ref{LeftRightFormula}, we have that $H^s$ and $\dot{H}^s$ are closed under compositions with Lipschitz functions with norm control as follows. 
\begin{lemma}\label{LipschitzLemma}
If $s \in (0,1)$, $f \in \dot{H}^s$, and $\psi\colon \mathbb{R}\rightarrow\mathbb{R}$ is Lipschitz, then $\psi \circ f \in \dot{H}^s$ with 
$$
    \|\psi \circ f\|_{\dot{H}^s} \lesssim \|\psi\|_{\text{Lip}}\|f\|_{\dot{H}^s}.
$$
Moreover, if $f \in H^s$ and $\psi(0)=0$, then $\psi\circ f \in H^s$ with 
$$
    \|\psi \circ f\|_{H^s} \lesssim \|\psi\|_{\text{Lip}}\|f\|_{H^s}.
$$
\end{lemma}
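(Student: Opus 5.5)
The plan is to read the homogeneous seminorm through the difference formula of Lemma \ref{LeftRightFormula}, where the Lipschitz bound becomes a pointwise inequality. Set $L=\|\psi\|_{\text{Lip}}$. For every $x,y\in I_0$,
$$
    |\psi(f(x))-\psi(f(y))|\le L\,|f(x)-f(y)|.
$$
Squaring, integrating over $I_-\times I_+$, multiplying by $|I|^{-1-2s}$ and summing over $I\in\mathcal D$ gives
$$
    \sum_{I\in\mathcal D}|I|^{-1-2s}\iint_{I_-\times I_+}|\psi(f(x))-\psi(f(y))|^2\,dxdy \le L^2\sum_{I\in\mathcal D}|I|^{-1-2s}\iint_{I_-\times I_+}|f(x)-f(y)|^2\,dxdy.
$$

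The one step needing care is applying Lemma \ref{LeftRightFormula} to $\psi\circ f$. As stated, the lemma assumes its argument already lies in $\dot H^s$. However, its proof rests on an exact identity,
$$
    \iint_{I_-\times I_+}|g(x)-g(y)|^2\,dxdy = |I|\,|(g,h_I)|^2 + \text{(nonnegative oscillation terms)},
$$
valid for each $I$ and any $g\in L^2$. Dropping the nonnegative terms and summing yields
$$
    \sum_I |I|^{-2s}|(g,h_I)|^2 \le \sum_I |I|^{-1-2s}\iint_{I_-\times I_+}|g(x)-g(y)|^2\,dxdy
$$
for every $g\in L^2$, with no a priori finiteness assumption. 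This holds in particular for $g=\psi\circ f$.

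To see that $\psi\circ f\in L^2$: if $f\in L^2$, then $|\psi(f)|\le|\psi(0)|+L|f|$, so $\psi\circ f\in L^2$ on the bounded interval $I_0$. In the homogeneous case I will take $f\in L^1_{\rm loc}$, or $L^2$, as is implicit in defining $(f,h_I)$; the same pointwise bound then shows $\psi\circ f$ is locally integrable.

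Combining the inequality above with the upper bound in Lemma \ref{LeftRightFormula} for $f$ gives
$$
    \|\psi\circ f\|_{\dot H^s}^2 \le L^2\sum_I|I|^{-1-2s}\iint_{I_-\times I_+}|f(x)-f(y)|^2\,dxdy \lesssim L^2\|f\|_{\dot H^s}^2,
$$
which is the first claim.

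For the inhomogeneous statement with $\psi(0)=0$, we have the pointwise bound $|\psi(f(x))|\le L|f(x)|$, so $\|\psi\circ f\|_{L^2}\le L\|f\|_{L^2}$. Adding this to the homogeneous estimate gives $\|\psi\circ f\|_{H^s}^2\lesssim L^2\|f\|_{H^s}^2$. No step is a real obstacle. The only point to flag is the remark that the lower bound in Lemma \ref{LeftRightFormula} holds for arbitrary $g\in L^2$, so membership of $\psi\circ f$ in $\dot H^s$ is a conclusion rather than an assumption.
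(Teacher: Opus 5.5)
Your proof is correct and follows the paper's argument: apply the pointwise Lipschitz bound inside the difference formula of Lemma~\ref{LeftRightFormula}, then add the elementary $L^2$ bound when $\psi(0)=0$. Your remark that the lower bound in Lemma~\ref{LeftRightFormula} holds for every $g\in L^2$ is a point the paper leaves implicit, and it makes membership of $\psi\circ f$ in $\dot{H}^s$ a conclusion rather than an assumption.
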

\begin{proof}
By Lemma \ref{LeftRightFormula} and the Lipschitz assumption on $\psi$, we have that 
\begin{align*}
    \|\psi \circ f\|_{\dot{H}^s}^2 &\approx \sum_{I \in \mathcal{D}} |I|^{-1-2s}\iint_{I_-\times I_+} |\psi(f(x)) - \psi(f(y))|^2\,dxdy\\
    &\leq \|\psi\|_{\text{Lip}}^2\sum_{I \in \mathcal{D}} |I|^{-1-2s}\iint_{I_-\times I_+} |f(x) - f(y)|^2\,dxdy\\
    &\approx \|\psi\|_{\text{Lip}}^2\|f\|_{\dot{H}^s}^2
\end{align*}
for any $f \in \dot{H}^s$, proving the first statement. Assuming further that $\psi(0)=0$, we also have
$$
    \|\psi\circ f\|_{L^2}^2 = \int_{I_0} |\psi(f(x))|^2\,dx = \int_{I_0} |\psi(f(x)) - \psi(0)|^2\,dx \leq \|\psi\|_{\text{Lip}}^2 \|f\|_{L^2}^2
$$
for any $f \in L^2$. Combining these bounds gives 
$$
    \|\psi\circ f\|_{H^s} \approx \|\psi \circ f\|_{\dot{H}^s} + \|\psi \circ f\|_{L^2} \lesssim \|\psi\|_{\text{Lip}}(\|f\|_{\dot{H}^s} + \|f\|_{L^2}) \approx \|\psi\|_{\text{Lip}}\|f\|_{H^s}
$$
for all $f \in H^s$, as desired.
\end{proof}

We observe that $H^s$ embeds continuously into $L^{\infty}$ when $s \in (\frac{1}{2},1)$. We omit the proof as it is exactly the same as in the global case from \cite{FAr2024}*{Proposition 8}.
\begin{prop}\label{HsContinuousEmbedding}
If $s \in (\frac{1}{2},1)$, then $H^s \subseteq L^{\infty}$ with 
$$
    \|f\|_{L^{\infty}} \lesssim \|f\|_{H^s}
$$
for all $f \in H^s$. 
\end{prop}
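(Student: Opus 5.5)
The plan is to bound $f$ pointwise through its Haar expansion and then sum the coefficients using the decay supplied by the $H^s$ norm. For $f \in H^s \subseteq L^2$ we write
$$
    f = \langle f\rangle_{I_0} + \sum_{I\in\mathcal{D}} (f,h_I)h_I,
$$
and first treat a finite linear combination of Haar functions and $\unit_{I_0}$ (dense in $H^s$). For a fixed $x \in I_0$, at most one $I \in \mathcal{D}$ of each length $2^{-j}$ contains $x$. Call this interval $I_j(x)$. Since $|h_I(x)| \le |I|^{-1/2}\unit_I(x)$, we get
$$
    |f(x)| \le |\langle f\rangle_{I_0}| + \sum_{j\ge 0} |I_j(x)|^{-1/2}|(f,h_{I_j(x)})|.
$$

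Next we insert the weight $|I|^{s}\cdot|I|^{-s}$ and apply Cauchy--Schwarz in $j$:
$$
    \sum_{j\ge 0} |I_j(x)|^{-1/2}|(f,h_{I_j(x)})| \le \Big(\sum_{j\ge0} 2^{-j(2s-1)}\Big)^{1/2}\Big(\sum_{j\ge 0}|I_j(x)|^{-2s}|(f,h_{I_j(x)})|^2\Big)^{1/2}.
$$
The first factor is a convergent geometric series exactly because $s>\frac12$. This is the only place the hypothesis on $s$ enters, and the constant blows up as $s\to\frac12$. The second factor is at most $\big(\sum_{I\in\mathcal{D}}|I|^{-2s}|(f,h_I)|^2\big)^{1/2} = \|f\|_{\dot H^s}$. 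In addition, $|\langle f\rangle_{I_0}|\le \|f\|_{L^2}$. Together these give $\|f\|_{L^\infty}\lesssim \|f\|_{H^s}$ uniformly in $x$, using the equivalence $\|f\|_{H^s}^2\approx \sum_I |I|^{-2s}|(f,h_I)|^2+\langle f\rangle_{I_0}^2$ noted after Definition \ref{SobolevDefinition}.

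The only delicate point is extending this from finite sums to general $f\in H^s$. The estimate above shows that the Haar series of $f$ converges absolutely at every $x$, with the uniform bound $\sum_I |I|^{-1/2}|(f,h_I)|\unit_I(x)\lesssim\|f\|_{H^s}$. So the partial sums over $\mathcal{D}_N$ form a Cauchy sequence in $L^\infty$, since their tails are controlled by the same computation applied to $\sum_{I\in\mathcal{D}_N^c}$. They also converge to $f$ in $L^2$, hence a.e. along a subsequence. Therefore $f$ agrees a.e. with the uniform limit, and the bound passes to the limit. No real obstacle arises here beyond bookkeeping.
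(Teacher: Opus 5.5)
Your proof is correct and complete. The pointwise Haar expansion, Cauchy--Schwarz against the weights $|I|^{s-\frac12}$ (square-summable along the dyadic chain through $x$ exactly because $s>\frac12$), the bound $|\langle f\rangle_{I_0}|\le\|f\|_{L^2}$, and the a.e.\ identification of $f$ with the uniform limit of its partial Haar sums are all sound.

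The paper gives no proof of this statement and only defers to the global argument in Proposition 8 of \cite{FAr2024}, so there is nothing in the paper to compare against beyond that citation. Your self-contained argument supplies the proof, and the local setting makes it slightly simpler: the single mean $\langle f\rangle_{I_0}$ takes the place of the large-scale contribution that, on $\mathbb{R}$, would have to be controlled separately by $\|f\|_{L^2}$.
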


For $s \in (0,1)$, we define the dyadic fractional integral operator, $T^s$, for $f \in H^s$ by 
\begin{align*}
    T^s f = \sum_{I \in \mathcal{D}} |I|^{s}\langle f \rangle_I \mathbbm{1}_I.
\end{align*}
It was shown in \cite{FAr2024}*{Lemma 3} that $T^s$ can be expressed as 
$$
    T^s f = (2^s-1)^{-1}\sum_{I \in \mathcal{D}} |I|^{s}(f,h_I)h_I
$$
when working in the real line, so that $T^sD^sf \approx f$ and $D^sT^sg \approx g$ for any $f \in H^s(\mathbb{R})$ and $g \in L^2(\R)$. We give a similar expression for the fractional integral operator $T^s$ in terms of its Haar coefficients and show that it inverts $J^s$ in our local setting. 

\begin{lemma}\label{Lemma:f=T^s(D^s)f}
    If $s\in (0,1)$, then 
    $$
        J^sT^sf = T^sJ^s f = (2^s-1)^{-1}f
    $$
    for all $f \in L^2$. 
\end{lemma}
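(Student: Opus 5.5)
The plan is to compute the Haar expansion of $T^s f$ explicitly on $I_0$, and then read off both identities coefficient by coefficient. Here $\mathcal D$ contains $I_0$ itself (the case $j=0$).

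First, I would check that $T^s$ is a bounded operator on $L^2$, so that all manipulations below are legitimate. Group the defining series by generation. Writing $E_kf=\sum_{I\in\mathcal D_k(I_0)}\langle f\rangle_I\mathbbm 1_I$ for the conditional expectation onto generation $k$, we have
$$T^sf=\sum_{k\ge0}2^{-ks}E_kf .$$
Since $\|E_kf\|_{L^2}\le\|f\|_{L^2}$, this series converges absolutely in $L^2$, and $\|T^sf\|_{L^2}\le(1-2^{-s})^{-1}\|f\|_{L^2}$. Hence we may pair term by term with $h_J$ and with $\mathbbm 1_{I_0}$.

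Second, I would compute the Haar coefficients of $T^sf$. Fix $J\in\mathcal D$.
\begin{itemize}
\item If $I\supseteq J$, then $\mathbbm 1_I$ is constant on $J$, so $(\mathbbm 1_I,h_J)=0$.
\item If $I\cap J=\emptyset$, the pairing also vanishes.
\item If $I\subsetneq J$, then $(|I|^s\langle f\rangle_I\mathbbm 1_I,h_J)=|I|^s h_J(I)\int_I f$.
\end{itemize}
For fixed $k\ge1$, summing over $I\in\mathcal D_k(J)$ gives
$$2^{-ks}|J|^s\sum_{I\in\mathcal D_k(J)}h_J(I)\int_If=2^{-ks}|J|^s(f,h_J),$$
because $h_J$ is constant on each such $I$. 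Summing the geometric series over $k\ge1$ yields
$$(T^sf,h_J)=\frac{2^{-s}}{1-2^{-s}}|J|^s(f,h_J)=(2^s-1)^{-1}|J|^s(f,h_J).$$
Similarly,
$$\langle T^sf\rangle_{I_0}=\sum_{k\ge0}2^{-ks}\langle f\rangle_{I_0}=\frac{2^s}{2^s-1}\langle f\rangle_{I_0}.$$
Therefore
$$T^sf=(2^s-1)^{-1}\Big(\sum_{J\in\mathcal D}|J|^s(f,h_J)h_J+2^s\langle f\rangle_{I_0}\Big),$$
which is the local analogue of \cite{FAr2024}*{Lemma 3}. The extra factor $2^s$ on the mean is exactly what the normalization $2^{-s}\langle f\rangle_{I_0}$ in $J^s$ is designed to cancel.

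Third, I would apply $J^s$, which acts diagonally in the basis $\{h_J\}\cup\{\mathbbm 1_{I_0}\}$. It multiplies the $h_J$-coefficient by $|J|^{-s}$ and the mean by $2^{-s}$. This gives $J^sT^sf=(2^s-1)^{-1}f$ by the Haar expansion of $f$.

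For the other order, $J^sf$ has coefficients $|J|^{-s}(f,h_J)$ and mean $2^{-s}\langle f\rangle_{I_0}$. Applying the formula for $T^s$ restores exactly the coefficients $(2^s-1)^{-1}(f,h_J)$ and mean $(2^s-1)^{-1}\langle f\rangle_{I_0}$.

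The only delicate point is this second composition when $f\in L^2\setminus H^s$, since then $J^sf$ need not lie in $L^2$. I would handle it by interpreting $T^sJ^sf$ through the coefficient formula above, which amounts to extending $T^s$ to sequences by the same rule. Alternatively, one can prove the identity for $f\in H^s$ (or for finite Haar sums) and note that it is then consistent with the diagonal definitions. The main step is the coefficient computation; everything else is bookkeeping.
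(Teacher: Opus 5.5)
Your proof is correct and is essentially the paper's argument. Both derive the Haar expansion of $T^s$ recorded in Corollary~\ref{TsHaarExpansion} from the geometric sum $\sum_{k\ge1}2^{-ks}=(2^s-1)^{-1}$ over the generations below $J$ (and $\sum_{k\ge0}2^{-ks}$ for the mean), and then let $J^s$ act diagonally.

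The one difference is how the coefficients are obtained. You pair $T^sf$ with $h_J$, after justifying $L^2$-convergence through $T^sf=\sum_{k\ge0}2^{-ks}E_kf$. The paper instead substitutes the Haar expansion of $\la f\ra_I$ into the definition of $T^s$ and interchanges the sums without comment.

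Your remark that $T^sJ^sf$ only makes literal sense when $J^sf\in L^2$, i.e.\ $f\in H^s$, is a fair point that the paper glosses over. Moreover, $f\in H^s$ is exactly the case used later, where one writes $f=T^sg$ with $g\in L^2$.
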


\begin{proof}
    Considering the Haar expansion of $f \in L^2$, we have that 
    $$
        \la f \ra_{J} = \sum_{\substack{I\in \mc D \\ J \subsetneq I \subseteq I_0}} (f,h_I)h_I(J) + \la f \ra_{I_0}
    $$
    for any $J \in \mc D$. Therefore, 
    \begin{align*}
       T^sf = \sum_{I \in \mathcal{D}} |I|^{s}\langle f \rangle_I \mathbbm{1}_I & = \sum_{I \in \mathcal{D}} |I|^{s} \mathbbm{1}_I \bigg( \sum_{\substack{J\in \mc D \\ I \subsetneq J \subseteq I_0}} (f,h_J)h_J(I) \bigg) +  \la f \ra_{I_0}  \sum_{I \in \mathcal{D}} |I|^{s} \mathbbm{1}_I \\
       & = \sum_{J \in \mathcal{D}} (f,h_J) \bigg( \sum_{I\subsetneq J} |I|^s h_J(I)\mathbbm{1}_I \bigg) + \la f \ra_{I_0} \sum_{k=1}^\infty  \sum_{I \in \mathcal{D}_k(I_0)} 2^{-ks} \unit_I \\
       & = \frac{1}{2^s-1} \sum_{J \in \mathcal{D}} (f,h_J) |J|^s h_J + \la f \ra_{I_0} \unit_{I_0} \sum_{k=1}^\infty 2^{-ks} \\
       & = \frac{1}{2^s-1} \sum_{J \in \mathcal{D}}  |J|^s (f,h_J) h_J + \frac{1}{1-2^{-s}}\la f \ra_{I_0} \unit_{I_0},
    \end{align*}
    where the third line follows by the calculation from \cite{FAr2024}*{Proposition 2} and the fact that $\sum_{I \in \mathcal{D}_k(I_0)} \unit_I = \unit_{I_0}$. Therefore,
    \begin{align*}
        (2^s-1)T^s f = \sum_{J \in \mathcal{D}}  |J|^s (f,h_J) h_J + 2^s \la f \ra_{I_0} \unit_{I_0}.
    \end{align*}
    Thus, 
    \begin{align*}
        (2^s-1)T^s (J^sf) & =  \sum_{K \in \mathcal{D}}  |K|^s (J^sf,h_K) h_K + 2^s \la J^sf \ra_{I_0} \unit_{I_0} \\
        & =   \sum_{K \in \mathcal{D}} (f,h_K) h_K +  \la f \ra_{I_0} \unit_{I_0}  = f.
    \end{align*}
    Similarly,
    $$
        (2^s-1)J^s(T^s f) = f,
    $$
    as desired.
\end{proof}

Note that, since $J^s$ is bounded from $H^s$ to $L^2$, $T^s$ is bounded from $L^2$ to $H^s$ by Lemma \ref{Lemma:f=T^s(D^s)f}. Additionally, in the proof of Lemma \ref{Lemma:f=T^s(D^s)f}, we have shown the following useful expression for $T^s$. \looseness=-1 
\begin{cor}\label{TsHaarExpansion}
    If $s\in (0,1)$, then
    $$
        T^sf =(2^s-1)^{-1} \Big(\sum_{I \in \mathcal{D}}  |I|^s (f,h_I) h_I + 2^s \la f \ra_{I_0} \unit_{I_0}\Big)
    $$
    for all $f \in L^2$. 
\end{cor}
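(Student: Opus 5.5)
The corollary is already implicit in the proof of Lemma \ref{Lemma:f=T^s(D^s)f}: the plan is to extract the identity established there and check that every manipulation is justified for arbitrary $f \in L^2$. Start from the definition $T^sf = \sum_{I\in\mathcal{D}} |I|^s \langle f\rangle_I \unit_I$. Insert the martingale expansion
$$
\langle f\rangle_I = \sum_{J \supsetneq I} (f,h_J)h_J(I) + \langle f\rangle_{I_0},
$$
which holds for every $f\in L^2$ by the Haar basis expansion. This splits $T^sf$ into a Haar part and a constant part.

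For the constant part, group the intervals by generation and use $\sum_{I\in\mathcal{D}_k(I_0)}\unit_I=\unit_{I_0}$. This gives $\langle f\rangle_{I_0}\unit_{I_0}\sum_{k\ge1}2^{-ks} = (2^s-1)^{-1}\langle f\rangle_{I_0}\unit_{I_0}$. Here the sum over $\mathcal{D}$ starts at intervals of length $1/2$, since $j\in\mathbb{N}$ with $j\ge1$. This is consistent with the factor $2^s(2^s-1)^{-1}$ in the statement once one notes that $\frac{1}{1-2^{-s}}=\frac{2^s}{2^s-1}$. So I would simply follow the paper's indexing convention there.

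For the Haar part, interchange the sums to obtain $\sum_J (f,h_J)\sum_{I\subsetneq J}|I|^s h_J(I)\unit_I$. Then compute the inner sum: for each $k\ge1$,
$$
\sum_{I\in\mathcal{D}_k(J)} h_J(I)\unit_I = h_J,
$$
so the inner sum equals $h_J|J|^s\sum_{k\ge1}2^{-ks} = (2^s-1)^{-1}|J|^sh_J$.

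The only real obstacle is justifying the interchange of summation for general $f\in L^2$. I would first prove the identity for finite Haar sums, where everything is finite except the geometric series in $k$, which converges absolutely. I would then pass to general $f$ by density. Both sides are bounded on $L^2$: the right side obviously, since $|I|^s\le1$. For the left side, restrict to $\mathcal{D}_N$ and note that the truncated operators converge, because $\sum_{|I|=2^{-k}}\langle f\rangle_I\unit_I$ is the conditional expectation $\mathbb{E}_k f$, with $\|\mathbb{E}_kf\|_{L^2}\le\|f\|_{L^2}$. Hence $T^sf=\sum_{k}2^{-ks}\mathbb{E}_kf$ converges absolutely in $L^2$ with norm at most $(2^s-1)^{-1}\|f\|_{L^2}$, and the identity extends by continuity.
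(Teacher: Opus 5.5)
Your Haar-part computation is correct. Your density argument is also fine, and more careful than the paper, which interchanges the sums formally. The constant part, however, is wrong as written.

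If the generations start at $k=1$, then $\sum_{k\ge1}2^{-ks}=\frac{2^{-s}}{1-2^{-s}}=(2^s-1)^{-1}$. So you obtain $(2^s-1)^{-1}\la f\ra_{I_0}\unit_{I_0}$, which differs from the required $2^s(2^s-1)^{-1}\la f\ra_{I_0}\unit_{I_0}$ by a factor of $2^s$. The identity $\frac{1}{1-2^{-s}}=\frac{2^s}{2^s-1}$ does not close this gap, because $\frac{1}{1-2^{-s}}$ is the series starting at $k=0$, not $k=1$. Testing $f=\unit_{I_0}$ makes this concrete. The statement then reads $\sum_{I\in\mathcal{D}}|I|^s\unit_I=\frac{2^s}{2^s-1}\unit_{I_0}$, which holds only if $I_0\in\mathcal{D}$. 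If $I_0\notin\mathcal{D}$, the left side is $\frac{1}{2^s-1}\unit_{I_0}$ and the corollary is false.

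The fix is to read $0\in\mathbb{N}$, so that $I_0\in\mathcal{D}$. The rest of your argument already forces this. The basis $\{h_I\}_{I\in\mathcal{D}}\cup\{\unit_{I_0}\}$ is only complete if $h_{I_0}$ is included. Likewise, the martingale expansion you insert, $\la f\ra_I=\sum_{J\supsetneq I}(f,h_J)h_J(I)+\la f\ra_{I_0}$, needs the term $J=I_0$ when $|I|=\frac12$.

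With $I_0\in\mathcal{D}$, the generation $k=0$ contributes $\la f\ra_{I_0}\unit_{I_0}$. The constant part is then $\la f\ra_{I_0}\unit_{I_0}\sum_{k\ge0}2^{-ks}=\frac{2^s}{2^s-1}\la f\ra_{I_0}\unit_{I_0}$. The Haar part only involves strict subintervals of $J$, so its series still starts at $k=1$. This asymmetry is exactly where the factor $2^s$ in the statement comes from.

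The $\sum_{k=1}^\infty$ in the corresponding line of the paper's proof of Lemma~\ref{Lemma:f=T^s(D^s)f} is a misprint: the value it records, $\frac{1}{1-2^{-s}}$, is the $k\ge0$ sum. You should not follow that indexing. With this correction, your proof is the paper's argument plus a rigorous justification of the interchange. Your $L^2$ bound for $T^s$ also changes, to $(1-2^{-s})^{-1}\|f\|_{L^2}$.
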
 

\subsection{\texorpdfstring{\boldmath Dyadic fractional Sobolev capacity, $\text{BMO}^s/\text{CMO}^s$, and $s$-Carleson sequences}{}}

We next introduce the dyadic fractional Sobolev capacity. 
\begin{defin}\label{DyadicSobolevCapacityDefinition}
For $s\in(0,1)$, the dyadic Sobolev $s$-capacity of $E \subseteq I_0$ is  
\begin{equation*}
    \text{Cap}_s(E) := \inf_{f\in \mc A(E)} \big\{ \| f \|_{H^s}^2 \big\},
\end{equation*}
where $\mc A (E) := \{ f\in H^s \colon f\geq 1 \text{ almost everywhere on } E \}$. We set $\text{Cap}_s(E) = \infty$ when $\mc A (E) = \emptyset $. The functions in $\mc A (E)$ are called admissible functions for $E$.\looseness=-1
\end{defin} 

We show that the dyadic fractional Sobolev capacity is an outer measure on $I_0$.
\begin{prop} \label{Prop:Capacity_properties}
    Let $s\in(0,1)$. The following hold:
    \begin{enumerate}
        \item $\displaystyle \text{Cap}_s(\emptyset) = 0$;
        \item if $E_1 \subseteq E_2 \subseteq I_0$, then $\text{Cap}_s(E_1) \leq \text{Cap}_s(E_2)$; and 
        \item if $E_1, E_2,\ldots \subseteq I_0$, then
        $$
            \text{Cap}_s \bigg(\bigcup_{n=1}^{\infty} E_n \bigg) \leq \sum_{n=1}^\infty \text{Cap}_s(E_n).
        $$
    \end{enumerate}
\end{prop}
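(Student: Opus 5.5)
We prove (1) and (2) directly from the definition. For (3) we establish \emph{exact} countable subadditivity by rewriting $\|D^sf\|_{L^2}^2$ as a nonnegative combination of double integrals of $|f(x)-f(y)|^2$, which makes the seminorm compatible with taking pointwise suprema.

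\emph{Parts (1) and (2).} For (1), the zero function is admissible for $\emptyset$, so $\text{Cap}_s(\emptyset)=0$. For (2), if $E_1\subseteq E_2$, then $\mc A(E_2)\subseteq \mc A(E_1)$. Taking infima gives $\text{Cap}_s(E_1)\le \text{Cap}_s(E_2)$, with the convention $\inf\emptyset=\infty$ covering the case $\mc A(E_2)=\emptyset$.

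\emph{Key identity for (3).} For $J\in\mc D$ set $a_{I_0}:=1$ and $a_J:=|J|^{-2s}-|\widehat J|^{-2s}=(1-2^{-2s})|J|^{-2s}\ge 0$ for $J\neq I_0$, where $\widehat J$ is the dyadic parent of $J$. For each $K\in\mc D$ the sum over ancestors telescopes:
$$
\sum_{J\supseteq K}a_J=|K|^{-2s}.
$$
As in the proof of Lemma \ref{LeftRightFormula}, Parseval gives $\int_J|f-\la f\ra_J|^2=\sum_{K\in\mc D(J)}|(f,h_K)|^2$. Interchanging the order of summation (all terms are nonnegative) then yields
$$
\|D^sf\|_{L^2}^2=\sum_{K\in\mc D}|K|^{-2s}|(f,h_K)|^2=\sum_{J\in\mc D}a_J\int_J|f-\la f\ra_J|^2=\sum_{J\in\mc D}\frac{a_J}{2|J|}\iint_{J\times J}|f(x)-f(y)|^2\,dxdy.
$$
The last equality uses the elementary identity $\int_J|f-\la f\ra_J|^2=\frac{1}{2|J|}\iint_{J\times J}|f(x)-f(y)|^2$. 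This is an exact formula in which every weight $a_J/(2|J|)$ is nonnegative.

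\emph{Proof of (3).} We may assume $\sum_n\text{Cap}_s(E_n)<\infty$. Fix $\varepsilon>0$ and choose $f_n\in\mc A(E_n)$ with $\|f_n\|_{H^s}^2\le \text{Cap}_s(E_n)+\varepsilon 2^{-n}$. Set $g:=\sup_n|f_n|$, which is measurable.

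We first check admissibility and integrability. Pointwise $g^2\le\sum_n|f_n|^2$, so $\|g\|_{L^2}^2\le\sum_n\|f_n\|_{L^2}^2<\infty$. Since a countable union of null sets is null, $g\ge1$ almost everywhere on $\bigcup_nE_n$.

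Next we control the seminorm. For a.e. $x,y$ we have
$$
|g(x)-g(y)|\le\sup_n\big||f_n(x)|-|f_n(y)|\big|\le\sup_n|f_n(x)-f_n(y)|,
$$
and therefore $|g(x)-g(y)|^2\le\sum_n|f_n(x)-f_n(y)|^2$. Inserting this into the identity above and using Tonelli gives
$$
\|D^sg\|_{L^2}^2\le\sum_n\|D^sf_n\|_{L^2}^2.
$$
In particular $g\in H^s$, so $g\in\mc A\big(\bigcup_nE_n\big)$, and
$$
\text{Cap}_s\Big(\bigcup_nE_n\Big)\le\|g\|_{H^s}^2\le\sum_n\|f_n\|_{H^s}^2\le\sum_n\text{Cap}_s(E_n)+\varepsilon.
$$
Letting $\varepsilon\to0$ proves (3).

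The main obstacle is exactness. Working with the Haar coefficients $(g,h_I)$ directly, or with the two-sided formula of Lemma \ref{LeftRightFormula}, only yields subadditivity up to a constant. The identity in the key step removes this problem: it expresses $\|D^sf\|_{L^2}^2$ itself, not merely an equivalent quantity, as a difference form with nonnegative weights. The essential points are the telescoping choice of $a_J$ and checking that $a_J\ge0$.
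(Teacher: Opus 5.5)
Your proof is correct. Its skeleton is the same as the paper's argument adapted from Kinnunen--Lehrb\"ack--V\"ah\"akangas: choose near-optimal admissible $f_n$, take a pointwise supremum, and use a difference-form expression for the seminorm so the supremum can be passed inside. The real difference is the representation of the seminorm, and it matters.

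The paper uses Lemma \ref{LeftRightFormula}, which only gives $\|f\|_{\dot{H}^s}^2\approx\sum_{I}|I|^{-1-2s}\iint_{I_-\times I_+}|f(x)-f(y)|^2\,dxdy$. That equivalence is applied in both directions. So the paper's chain really proves only $\text{Cap}_s(\bigcup_n E_n)\le C\big(\sum_n\text{Cap}_s(E_n)+\epsilon\big)$ for some $C>1$, and the concluding inequality with constant one does not follow from the $\approx$ steps.

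Your telescoping weights $a_J$, combined with $\int_J|f-\la f\ra_J|^2=\frac{1}{2|J|}\iint_{J\times J}|f(x)-f(y)|^2\,dxdy$, give an exact identity for $\|D^sf\|_{L^2}^2$ with nonnegative weights. You therefore get countable subadditivity with constant one, which is the statement as written and makes $\text{Cap}_s$ a genuine outer measure.

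The paper's route is shorter because it reuses an existing lemma. Its quasi-subadditive conclusion also suffices for the rest of the paper, which only invokes monotonicity of $\text{Cap}_s$. Your route costs one extra computation but gets the constant right.

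Two smaller points. Taking $g=\sup_n|f_n|$ rather than $\sup_n u_n$ makes $g^2\le\sum_n|f_n|^2$ immediate, whereas the paper relies on the true but unstated bound $|\sup_i u_i|\le\sup_i|u_i|$. You also correctly verify $g\in L^2$ before applying your identity, which was derived via Parseval for $L^2$ functions.
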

  \begin{proof}
     Property (1) is clear. Property (2) follows since $\mc A(E_2) \subseteq \mc A(E_1)$ for $E_1\subseteq E_2$. 

     To prove (3), assume $\sum_{n=1}^\infty \text{Cap}_s(E_n) < \infty$. Following the proof in \cite{KinnunenBook21}*{Theorem 5.3}, let $\epsilon >0$ and for every $i\in \N$, let $u_i \in \mc A(E_i)$, with 
     \begin{equation*}
         \|u_i\|_{H^s}^2 \leq \capac_s(E_i) + 2^{-i}\epsilon.
     \end{equation*}
     Clearly, $v= \sup_{i\in \N}u_i \geq 1$ almost everywhere on $\bigcup_{n=1}^{\infty} E_n$, since $u_i \geq 1$ almost everywhere on $E_i$ for every $i\in \N$. To show that $v \in H^s$, we use Lemma \ref{LeftRightFormula} 
     to see that 
     \begin{align*}
         \|v\|_{H^s}^2 & \approx \int_{I_0} | \sup_{i\in \N}u_i(x)|^2 dx + \sum_{I \in \mathcal{D}} |I|^{-1-2s}\iint_{I_-\times I_+} |\sup_{i\in \N}u_i(x) -  \sup_{i\in \N}u_i(y)|^2\,dxdy\\ 
         & \leq \int_{I_0}  \sup_{i\in \N}|u_i(x)|^2 dx + \sum_{I \in \mathcal{D}} |I|^{-1-2s}\iint_{I_-\times I_+} \sup_{i\in \N}|u_i(x) - u_i(y)|^2\,dxdy \\ 
         & \leq \int_{I_0} \sum_{i=1}^\infty |u_i(x)|^2 dx + \sum_{I \in \mathcal{D}} |I|^{-1-2s}\iint_{I_-\times I_+} \sum_{i=1}^\infty|u_i(x) - u_i(y)|^2\,dxdy\\
         & = \sum_{i=1}^\infty \left(\int_{I_0}  |u_i(x)|^2 dx + \sum_{I \in \mathcal{D}} |I|^{-1-2s}\iint_{I_-\times I_+} |u_i(x) - u_i(y)|^2\,dxdy\right) \\
         & \approx \sum_{i=1}^\infty \|u_i\|_{H^2}^2 \\
         &\leq \sum_{i=1}^\infty \capac_s(E_i) + \epsilon,
     \end{align*}
     where the first inequality follows by the elementary property that for any two bounded sequences $\{a_n\}_{n\in \N}$ and $\{b_n\}_{n\in\N}$, one has $\lvert  \sup_{n\in \N} a_n -  \sup_{n\in \N}b_n \rvert^2 \le  \sup_{n\in \N}|a_n - b_n|^2$. Therefore, since $\sum_{i=1}^\infty \capac_s(E_i)<\infty$, $v \in H^s$. Thus, $v \in \mc A (\bigcup_{n=1}^{\infty} E_n)$, and
     $$
        \capac_s \bigg(\bigcup_{n=1}^{\infty} E_n \bigg) \leq \|v\|_{H^s}^2 \leq \sum_{n=1}^\infty \capac_s(E_n) + \epsilon
     $$
     for every $\epsilon >0$. Letting $\epsilon \rightarrow 0$ completes the proof.
\end{proof}

The dyadic fractional Sobolev capacity of a dyadic interval behaves like its Lebesgue measure to a fractional power or like a constant, in the low and high-regularity cases, respectively.
\begin{prop}\label{Cap(I)_estimate}
    If $s \in (0,1)$, then 
    \begin{equation*}
        \text{Cap}_s(I) \approx \begin{cases} 
            |I|^{1-2s} & 0 < s < \frac{1}{2}\\
            1 & \frac{1}{2} < s< 1
        \end{cases}
    \end{equation*}
    for all $I \in \mathcal{D}$. 
\end{prop}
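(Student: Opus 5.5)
We prove the upper and lower bounds separately. Throughout we use the norm equivalence $\|f\|_{H^s}^2\approx \sum_{J\in\mc D}|J|^{-2s}|(f,h_J)|^2+\|f\|_{L^2}^2$ and the formula of Lemma \ref{LeftRightFormula}.

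\emph{Upper bound.} We exhibit admissible functions for $I$.

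For $\frac12<s<1$, take $f=\unit_{I_0}\in\mc A(I)$. All its Haar coefficients vanish, so $\|f\|_{H^s}^2=1$ and $\text{Cap}_s(I)\le 1$.

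For $0<s<\frac12$, take $f=\unit_I$. We have $(\unit_I,h_J)=0$ unless $J\supsetneq I$. For $J\supsetneq I$, $|(\unit_I,h_J)|=|I||J|^{-1/2}$. Writing $|J|=2^m|I|$ with $m\ge1$ (and $|J|\le1$), this gives
$$
\sum_{J\supsetneq I}|J|^{-2s}|I|^2|J|^{-1}=|I|^{1-2s}\sum_{m\ge1}2^{-m(1+2s)}\approx |I|^{1-2s}.
$$
Adding $\|\unit_I\|_{L^2}^2=|I|\le|I|^{1-2s}$ yields $\text{Cap}_s(I)\lesssim|I|^{1-2s}$.

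\emph{Lower bound.} Let $f\in\mc A(I)$. By Lemma \ref{LipschitzLemma} with $\psi(t)=\min(\max(t,0),1)$, we may replace $f$ by $\psi\circ f$ at the cost of a constant. So assume $0\le f\le1$ with $f=1$ a.e. on $I$.

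For $\frac12<s<1$, Proposition \ref{HsContinuousEmbedding} gives $1\le\|f\|_{L^\infty}\lesssim\|f\|_{H^s}$, so $\text{Cap}_s(I)\gtrsim1$.

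For $0<s<\frac12$, we compare $f$ with its averages along the dyadic chain $I=I_k\subsetneq I_{k-1}\subsetneq\dots\subsetneq I_0$, where $|I_j|=2^{-j}$. Note $\la f\ra_{I_k}=1$.
\begin{itemize}
\item If $\la f\ra_{I_0}\ge\frac12$, then $\|f\|_{L^2}^2\ge\la f\ra_{I_0}^2\ge\frac14\ge\frac14|I|^{1-2s}$, and we are done.
\item Otherwise the averages rise by at least $\frac12$ along the chain. Each step satisfies $|\la f\ra_{I_{j+1}}-\la f\ra_{I_j}|=|I_j|^{-1/2}|(f,h_{I_j})|$. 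Hence, by Cauchy--Schwarz with weights $|I_j|^{\epsilon}$ for some $0<\epsilon<1-2s$,
$$
\tfrac12\le\sum_{j<k}|I_j|^{-1/2}|(f,h_{I_j})|\le\Big(\sum_{j<k}|I_j|^{-2s}|(f,h_{I_j})|^2\Big)^{1/2}\Big(\sum_{j<k}|I_j|^{2s-1}\Big)^{1/2}.
\end{itemize}
Since $2s-1<0$, the last sum is geometric and dominated by its smallest interval: $\sum_{j<k}|I_j|^{2s-1}\approx|I_{k-1}|^{2s-1}\approx|I|^{2s-1}$. Squaring gives $\|f\|_{\dot H^s}^2\gtrsim|I|^{1-2s}$. (The auxiliary weight $|I_j|^{\epsilon}$ is not actually needed here; plain Cauchy--Schwarz suffices because of the geometric decay.)

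The main point needing care is this lower bound for $s<\frac12$, which comes from the telescoping-average argument above. The degenerate case $I=I_0$ is covered directly by the $L^2$ term. Taking the infimum over $f\in\mc A(I)$ gives the claimed two-sided estimate.
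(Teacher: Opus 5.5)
Your proof is correct and follows essentially the same route as the paper. The paper uses the same test functions $\unit_I$ and $\unit_{I_0}$ for the upper bounds and the $L^\infty$ embedding for $s>\frac12$. For $s<\frac12$ it writes $\la f\ra_I-\la f\ra_{I_0}=\sum_{J\supsetneq I}(f,h_J)h_J(I)$, which is exactly your telescoping sum along the ancestor chain, and then applies Cauchy--Schwarz against $\sum_{J\supsetneq I}|J|^{2s-1}\approx|I|^{2s-1}$. The differences are cosmetic: your case split on $\la f\ra_{I_0}$ replaces the paper's absorption of the mean term into the Cauchy--Schwarz, and both the truncation by $\psi$ and the remark about the weights $|I_j|^{\epsilon}$ are unnecessary and can be dropped.
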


\begin{proof}
We first consider the case $s \in (0,\frac{1}{2})$. Let $I \in \mathcal{D}$ and put $f = \mathbbm{1}_{I}$. Clearly, $f \ge 1$ on $I$. Since $
    |(f,h_J)| = \begin{cases}
        |I||J|^{-\frac{1}{2}} & J\supsetneq I\\
        0 & \text{otherwise}
    \end{cases},
$ 
we have that 
\begin{align*}
    \|f\|_{H^s}^2 &\approx |I| + \sum_{J \in \mathcal{D}} |J|^{-2s}|(f,h_J)|^2  = |I| + |I|^2\sum_{\substack{J \in \mathcal{D}\\ J \supsetneq I}} |J|^{-1-2s}\\
    & \leq |I| + |I|^{1-2s}\sum_{k=1}^{\infty} 2^{-(1+2s)k} \lesssim |I|^{1-2s}.
\end{align*}
Therefore $f \in \mathcal{A}(I)$ and 
$$
    \text{Cap}_s(I) \leq \|f\|_{H^s}^2 \lesssim |I|^{1-2s},
$$
proving the upper bound. For the lower bound, let $f \in \mathcal{A}(I)$. By Cauchy-Schwarz, we have 
\begin{align*}
    |I|^2 \leq \bigg(\int_I f\bigg)^2 &\approx \left( \sum_{J \in \mathcal{D}} (f,h_J) (h_J, \mathbbm{1}_I) + \langle f\rangle_{I_0}|I| \right)^2\\
    &\lesssim \bigg( \sum_{\substack{J \in \mathcal{D}\\ J \supsetneq I}} (f,h_J) h_J(I) \bigg)^2 + \langle f\rangle_{I_0}^2|I|^2 \\
    &\leq \bigg(\sum_{\substack{J \in \mathcal{D}\\ J \supsetneq I}} |J|^{-2s}|(f,h_J)|^2 + \langle f\rangle_{I_0}^2\bigg)\bigg(\sum_{\substack{J \in \mathcal{D}\\ J \supsetneq I}} |J|^{2s-1} + |I|^2\bigg)\\
    &\leq \|f\|_{H^s}^2(|I|^{2s-1} + |I|^2)\\
    &\lesssim \|f\|_{H^s}^2|I|^{2s-1}
\end{align*}
Dividing both sides above by $|I|^{2s-1}$ gives  
\begin{align*}
    \text{Cap}_s(I) \ge \|f\|_{H^s}^2 \gtrsim |I|^{1-2s}.
\end{align*}

We now consider the case $s \in (\frac{1}{2},1)$. Let $I \in \mathcal{D}$ and note that by part (2) of Proposition \ref{Prop:Capacity_properties}, we have the upper bound
$$
    \text{Cap}_s(I) \leq \text{Cap}_s(I_0) \leq \|\mathbbm{1}_{I_0}\|_{H^s} = 1
$$
for any $s \in (0,1)$. For the lower bound when $s \in (0,\frac{1}{2})$, note that Proposition \ref{HsContinuousEmbedding} gives
$$
    \|f\|_{H^s} \gtrsim \|f\|_{L^{\infty}} \ge 1
$$
for any $f \in \mathcal{A}(I)$. Taking the infimum over all admissible $f$ yields $\text{Cap}_s(I)\gtrsim 1$.  
\end{proof}

The connection between the $L^2$-boundedness of $\Pi_b$ and the condition $b \in \text{BMO}$ is clearly understood through the following formulation of $\text{BMO}$ via Carleson sequences: $b \in \text{BMO}$ if and only if $b \in L^1$ and 
$$
    \|b\|_{\text{BMO}} := \sup_{I\in\mathcal{D}} \bigg(\frac{1}{|I|}\sum_{J \in \mathcal{D}(I)} |(b,h_J)|^2\bigg)^{\frac{1}{2}} <\infty,
$$
and the compactness of $\Pi_b$ is dictated by the condition $b \in \text{CMO}$, that is, $b \in \text{BMO}$ and 
$$
    \lim_{N\rightarrow \infty} \sup_{I \in \mathcal{D}} \frac{1}{|I|} \sum_{J \in \mathcal{D}(I)\cap \mathcal{D}_N^c} |(b,h_J)|^2 = 0.
$$
We extend these definitions to the fractional setting as follows. 
\begin{defin}\label{BMOsCMOsDefinition}
    Given $s \in (0,1)$, we say that $b \in \text{BMO}^s$ if 
    $$
        \|b\|_{\text{BMO}^s} := \sup_{\{I_k\}_{k=1}^{n}} \bigg(\frac{1}{\text{Cap}_s\big( \bigcup_{k=1}^n I_k \big)}\sum_{k=1}^{n}\sum_{J \in \mathcal{D}(I_k)} |J|^{-2s}|(b,h_J)|^2\bigg)^{\frac{1}{2}} < \infty,
    $$
    and $b \in \text{CMO}^s$ if $b \in \text{BMO}^s$ and 
    $$
        \lim_{N\rightarrow \infty} \sup_{\{I_k\}_{k=1}^{n}} \frac{1}{\text{Cap}_s\big( \bigcup_{k=1}^n I_k \big)}\sum_{k=1}^{n}\sum_{J \in \mathcal{D}(I_k)\cap \mathcal{D}_N^c} |J|^{-2s}|(b,h_J)|^2 = 0,
    $$
    where the suprema are over all finite collections of pairwise disjoint intervals $\{I_k\}_{k=1}^n\subseteq\mathcal{D}$.
\end{defin}
\noindent As usual, we identify functions in $\text{BMO}^s$ and $\text{CMO}^s$ that differ by a constant.

We show that $\text{BMO}^s$ embeds continuously into $\text{BMO}$ for all $s \in (0,1)$.
\begin{prop}\label{BMOsContainment}
If $s \in (0,1)$, then $\text{BMO}^s \subseteq \text{BMO}$ with
$$
    \|b\|_{\text{BMO}} \lesssim \|b\|_{\text{BMO}^s}
$$
for all $b \in \text{BMO}^s$. Moreover, $\text{CMO}^s \subseteq \text{CMO}$. 
\end{prop}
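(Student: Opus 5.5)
To prove Proposition \ref{BMOsContainment}, I will test the $\text{BMO}^s$ condition on a single dyadic interval, i.e. take $n=1$ and $I_1=I$ in Definition \ref{BMOsCMOsDefinition}, and then trade the weights $|J|^{-2s}$ and the capacity $\text{Cap}_s(I)$ for Lebesgue measure using Proposition \ref{Cap(I)_estimate}. For a fixed $I\in\mathcal{D}$, every $J\in\mathcal{D}(I)$ satisfies $|J|\le |I|\le 1$, so $|J|^{-2s}\ge |I|^{-2s}$. Hence
\begin{align*}
\frac{1}{|I|}\sum_{J\in\mathcal{D}(I)}|(b,h_J)|^2 \le \frac{|I|^{2s}}{|I|}\sum_{J\in\mathcal{D}(I)}|J|^{-2s}|(b,h_J)|^2 \le \frac{|I|^{2s-1}\,\text{Cap}_s(I)}{\text{Cap}_s(I)}\sum_{J\in\mathcal{D}(I)}|J|^{-2s}|(b,h_J)|^2 .
\end{align*}
It therefore suffices to show that $|I|^{2s-1}\text{Cap}_s(I)\lesssim 1$ uniformly in $I\in\mathcal{D}$.

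For this bound I use Proposition \ref{Cap(I)_estimate}. When $s<\frac12$ it gives $\text{Cap}_s(I)\approx|I|^{1-2s}$, so the product is $\approx 1$. When $s>\frac12$ we have $\text{Cap}_s(I)\lesssim 1$ and $|I|^{2s-1}\le 1$, so the product is again bounded. The case $s=\frac12$ is excluded from Proposition \ref{Cap(I)_estimate}, and this is the one step I expect to need care. It is handled directly by the upper-bound argument in that proof: since $\text{Cap}_s(I)\le\|\mathbbm{1}_I\|_{H^s}^2$, we get $\text{Cap}_s(I)\lesssim |I|+|I|^{1-2s}\sum_k 2^{-(1+2s)k}\lesssim |I|^{1-2s}$ for every $s\in(0,1)$. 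Indeed, only this upper bound on capacity is ever needed, so the argument is uniform in $s$, with constant depending only on $s$. Taking the supremum over $I$ gives $\|b\|_{\text{BMO}}^2\lesssim\|b\|_{\text{BMO}^s}^2$. The membership $b\in L^1$ is implicit in the definition of $\text{BMO}^s$, as in Theorem \ref{Thm: ParaproductInequality}, where $b\in L^1$ is assumed.

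For the inclusion $\text{CMO}^s\subseteq\text{CMO}$, I repeat the same chain with the sums restricted to $J\in\mathcal{D}(I)\cap\mathcal{D}_N^c$. The weight comparison $|J|^{-2s}\ge|I|^{-2s}$ and the capacity bound are unchanged, so
\[
\sup_{I}\frac{1}{|I|}\sum_{J\in\mathcal{D}(I)\cap\mathcal{D}_N^c}|(b,h_J)|^2\lesssim \sup_{\{I_k\}}\frac{1}{\text{Cap}_s(\bigcup_k I_k)}\sum_k\sum_{J\in\mathcal{D}(I_k)\cap\mathcal{D}_N^c}|J|^{-2s}|(b,h_J)|^2 .
\]
Letting $N\to\infty$, the right-hand side tends to $0$ because $b\in\text{CMO}^s$. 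Combined with the first part, which gives $b\in\text{BMO}$, this shows $b\in\text{CMO}$.
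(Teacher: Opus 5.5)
Your proof is correct and follows essentially the same route as the paper: you test the $\text{BMO}^s$ (resp.\ $\text{CMO}^s$) condition on a single interval, use $|J|^{-2s}\ge|I|^{-2s}$, and compare $|I|^{2s-1}$ with $1/\text{Cap}_s(I)$ via Proposition~\ref{Cap(I)_estimate}. The only difference is cosmetic: you treat all $s\in(0,1)$ at once with the upper bound $\text{Cap}_s(I)\le\|\mathbbm{1}_I\|_{H^s}^2\lesssim|I|^{1-2s}$, whereas the paper splits into $s<\frac12$ and $s\in[\frac12,1)$, using $\text{Cap}_s(I)\le\text{Cap}_s(I_0)\le 1$ in the second case (which already covers $s=\frac12$).
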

\begin{proof}
Let $b \in \text{BMO}^s$ and fix $I \in \mathcal{D}$. If $s \in (0,\frac{1}{2})$, we have by Proposition \ref{Cap(I)_estimate}, that 
\begin{align*}
    \frac{1}{|I|}\sum_{J \in \mathcal{D}(I)} |(b,h_J)|^2 & \leq |I|^{2s-1}\sum_{J \in \mathcal{D}(I)}|J|^{-2s}|(b,h_J)|^2\\
    &\approx \frac{1}{\text{Cap}_s(I)}\sum_{J \in \mathcal{D}(I)}|J|^{-2s}|(b,h_J)|^2 \leq \|b\|_{\text{BMO}^s}.
\end{align*}
If $s \in [\frac{1}{2},1)$, we have that $|I|^{2s-1} \leq 1 \leq \frac{1}{\text{Cap}_s(I)}$, and so the above display also holds true. Taking a supremum over all $I \in \mathcal{D}$, we conclude that $\|b\|_{\text{BMO}} \lesssim \|b\|_{\text{BMO}^s}$. 

For the containment $\text{CMO}^s\subseteq \text{CMO}$, we observe that the same argument gives
$$
    \lim_{N\rightarrow \infty} \sup_{I \in \mathcal{D}}\frac{1}{|I|} \sum_{J \in \mathcal{D}(I)\cap \mathcal{D}_N^c} |(b,h_J)|^2 \lesssim \lim_{N\rightarrow \infty} \sup_{I \in \mathcal{D}}\frac{1}{\text{Cap}_s(I)}\sum_{J \in \mathcal{D}(I)\cap\mathcal{D}_N^c}|J|^{-2s}|(b,h_J)|^2 = 0
$$
whenever $b \in \text{CMO}^s$. 
\end{proof}

The proof of Theorem \ref{Thm: ParaproductInequality} relies on a dyadic fractional version of the classical Carleson embedding theorem. To state this, we must first define an $s$-Carleson sequence. 
\begin{defin}\label{Defin:sCarlesonSequence}
    Given $s \in (0,1)$, we say that $\{\mu(I)\}_{I \in \mathcal{D}}\subseteq \mathbb{R}_+$ is an $s$-Carleson sequence if \looseness=-1
$$
    \|\mu\|_{s\text{C}}:= \sup_{\{I_k\}_{k=1}^{n}}\bigg(\frac{1}{\text{Cap}_s\big(\bigcup_{k=1}^n I_k\big)}\sum_{k=1}^n\sum_{J \in \mathcal{D}(I_k)} \mu(J)\bigg)^{\frac{1}{2}} < \infty.
$$
where the supremum is over every finite collection of pairwise disjoint intervals $\{I_k\}_{k=1}^n\subseteq\mathcal{D}$.
\end{defin} 
\noindent Note that $b \in \text{BMO}^s$ if and only if $\mu(I) = |I|^{-2s}|(b,h_I)|^2$ for $I \in \mathcal{D}$ forms an $s$-Carleson sequence and that, in this case, $\|b\|_{\text{BMO}^s} = \|\mu\|_{s\text{C}}$.

\section{Paraproducts on local dyadic fractional Sobolev spaces}\label{Section: Main}

The goal of this section is to prove our main result on boundedness and compactness of dyadic paraproducts on dyadic fractional Sobolev spaces, Theorem \ref{Thm: ParaproductInequality}. We first establish a dyadic fractional Carleson embedding theorem, which we then use to justify Theorem \ref{Thm: ParaproductInequality}.

\subsection{Dyadic fractional Carleson embedding theorem}
We establish the following dyadic $s$-capacitary Carleson embedding theorem.
\begin{thm} \label{Thm:Carleson_lemma}
    Let $s \in (0,1)$ and $\{\mu(I)\}_{I\in \mc D}\subseteq\mathbb{R}_+$. Then 
    \begin{align}\label{Eq:Carleson_lemma}
        \sum_{I\in \mc D} \la f \ra_I^2 \, \mu(I) \lesssim \|f\|_{H^s}^2
    \end{align}
    for all $f \in H^s$ if and only if $\{\mu(I)\}_{I \in \mathcal{D}}$ is an $s$-Carleson sequence. Moreover, $\|\mu\|_{s\text{C}}^2$ is comparable to the infimum over all admissible constants in \eqref{Eq:Carleson_lemma}.
\end{thm}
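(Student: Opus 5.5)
Necessity and sufficiency are proved separately; the sufficiency direction is where the real work lies.

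\emph{Necessity.} Assume \eqref{Eq:Carleson_lemma} holds with constant $C$. Fix pairwise disjoint $I_1,\dots,I_n\in\mathcal{D}$ and put $E=\bigcup_k I_k$. Take any admissible $f\in\mathcal{A}(E)$. Replacing $f$ by $\psi\circ f$ with $\psi(t)=\min(\max(t,0),1)$ is allowed: by Lemma \ref{LipschitzLemma} this is harmless up to constants, and it keeps $f$ admissible. For every $J\in\mathcal{D}(I_k)$ we have $f\ge 1$ a.e. on $J$, so $\langle f\rangle_J\ge 1$. Hence
$$
\sum_{k=1}^n\sum_{J\in\mathcal{D}(I_k)}\mu(J)\le \sum_{I\in\mathcal{D}}\langle f\rangle_I^2\mu(I)\le C\|f\|_{H^s}^2 .
$$
The intervals $J$ coming from different $I_k$ are distinct because the $I_k$ are disjoint, so no term is counted twice. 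Taking the infimum over $f\in\mathcal{A}(E)$ gives $\|\mu\|_{s\text{C}}^2\le C$.

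\emph{Sufficiency.} Assume $\|\mu\|_{s\text{C}}<\infty$. Since $|\langle f\rangle_I|\le\langle|f|\rangle_I$ and $\||f|\|_{H^s}\lesssim\|f\|_{H^s}$ by Lemma \ref{LipschitzLemma}, we may take $f\ge0$. By monotone convergence it suffices to sum over the finite family $\mathcal{D}_N$ with bounds uniform in $N$. Use the layer-cake formula
$$
\sum_I\langle f\rangle_I^2\mu(I)=\int_0^\infty 2t\,\mu\big(\{I:\langle f\rangle_I>t\}\big)\,dt .
$$
For each $t$, let $\{I_k^t\}$ be the maximal intervals in $\mathcal{D}_N$ with $\langle f\rangle_I>t$. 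These are finitely many and pairwise disjoint, and their union is contained in $E_t:=\{Mf>t\}$, where $M$ is the dyadic maximal function. The $s$-Carleson condition then gives
$$
\mu\big(\{I:\langle f\rangle_I>t\}\big)\le\|\mu\|_{s\text{C}}^2\,\text{Cap}_s(E_t).
$$
The theorem is therefore reduced to the Maz'ya-type capacitary strong inequality
$$
\int_0^\infty t\,\text{Cap}_s(\{Mf>t\})\,dt\lesssim\|f\|_{H^s}^2 ,
$$
which is presumably the content of Proposition \ref{Prop: Capac_Ineq}.

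\emph{Capacitary inequality.} This is the main obstacle. It is proved in two steps.

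First, a weak-type step: $\text{Cap}_s(\{Mf>t\})\lesssim t^{-2}\|f\|_{H^s}^2$. To get it, write $\{Mf>t\}$ as a disjoint union of maximal intervals $I_k$, on which $\langle f\rangle_{I_k}>t$. The plan is to build an admissible function from $f$ itself, for instance
$$
g=t^{-1}\Big(f\,\unit_{E^c}+\sum_k\langle f\rangle_{I_k}\unit_{I_k}\Big).
$$
This $g$ is $\ge 1$ on $E$ up to a constant. Its $H^s$ norm is then controlled through Lemma \ref{LeftRightFormula}: replacing $f$ by its averages on the stopping intervals does not increase the double integrals over $I_-\times I_+$ beyond a constant multiple. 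This works because the averaging is a conditional expectation compatible with the dyadic structure, so $g$'s Haar coefficients are those of $f$ outside the $I_k$ and vanish inside them.

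Second, the strong-type step, done à la Maz'ya by truncation. Apply the weak bound at levels $t=2^j$ to the truncations
$$
f_j=\min\big((f-2^{j-1})_+,\,2^{j-1}\big),
$$
so that $\{Mf>2^{j+1}\}\subseteq\{Mf_j\gtrsim 2^{j}\}$ modulo a stopping-time adjustment. This adjustment is delicate, because the maximal function does not commute with truncation. The fallback is to run the level decomposition on the stopping intervals directly, comparing $\langle f\rangle$ across generations of a sparse-type tree. The final ingredient is
$$
\sum_j\|f_j\|_{H^s}^2\lesssim\|f\|_{H^s}^2 .
$$
This follows from Lemma \ref{LeftRightFormula}, since for each pair $(x,y)$ the sum over $j$ of $|f_j(x)-f_j(y)|^2$ is at most $|f(x)-f(y)|^2$: the increments are split across disjoint level bands. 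Summing over $j$ then gives the strong inequality, and the norm comparability follows from the two directions.
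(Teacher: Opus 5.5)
Your necessity argument is correct, and so is your reduction of sufficiency to
\[
\int_0^\infty t\,\text{Cap}_s(\{Mf>t\})\,dt\lesssim\|f\|_{H^s}^2 .
\]
That reduction (take $f\ge 0$, layer-cake, maximal intervals, the $s$-Carleson condition, monotonicity of $\text{Cap}_s$) matches the paper. Your weak-type bound, via the conditional expectation onto the stopping intervals, is also correct.

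The gap is the passage from weak to strong type for $Mf$, which is the heart of the theorem. Proposition \ref{Prop: Capac_Ineq} concerns the level sets $\{f\ge t\}$ of $f$ itself, not of $Mf$. The weak-type bounds cannot simply be summed, since $\sum_j 2^{2j}\cdot 2^{-2j}\|f\|_{H^s}^2=\infty$.

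Your proposed bridge $\{Mf>2^{j+1}\}\subseteq\{Mf_j\gtrsim 2^j\}$ is false. Take $f=A\,\unit_S$ with $S\subsetneq I_0$ a small dyadic interval and $A|S|=2^{j+2}$. Then $Mf\ge\la f\ra_{I_0}>2^{j+1}$ everywhere, so $\{Mf>2^{j+1}\}=I_0$. But $f_j=2^{j-1}\unit_S$, and $Mf_j=2^{j-1}|S|\ll 2^j$ on the half of $I_0$ not containing $S$. The level set of $Mf$ at height $2^j$ is governed by the truncations $f_k$ with $k\approx j+\log_2(1/|S|)$. The level bands therefore get mixed across scales, and Maz'ya's telescoping breaks down. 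The ``sparse-tree fallback'' you mention is not an argument.

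The idea you are missing is to avoid estimating $\text{Cap}_s(\{Mf>t\})$ directly. Instead, dominate $Mf$ pointwise by a single $H^s$ function of comparable norm.
\begin{enumerate}
\item Write $f=T^sg$ with $g=(2^s-1)J^sf$, so $\|g\|_{L^2}\approx\|f\|_{H^s}$ by Lemma \ref{Lemma:f=T^s(D^s)f}.
\item Since $|T^sg|\le T^s|g|$, Lemma \ref{Lemma: MT^s leq T^sM} gives $Mf\lesssim T^sMg$, hence $\{Mf>t\}\subseteq\{T^sMg\gtrsim t\}$.
\item Apply Proposition \ref{Prop: Capac_Ineq} to the single function $F=T^sMg$. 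This is exactly your truncation argument, and here the bands do telescope because one fixed function is truncated at every level.
\end{enumerate}
This yields
\[
\int_0^\infty t\,\text{Cap}_s(\{Mf>t\})\,dt\lesssim\|T^sMg\|_{H^s}^2\approx\|Mg\|_{L^2}^2\le\|g\|_{L^2}^2\approx\|f\|_{H^s}^2 .
\]
Without this domination, or some independent proof that $Mf$ is controlled in $H^s$, your sufficiency direction is incomplete.
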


The following Maz'ya-type capacitary inequality is crucial to the proof of Theorem \ref{Thm:Carleson_lemma}. 
\begin{prop} \label{Prop: Capac_Ineq}
    If $s \in (0,1)$, then 
    \begin{equation*}
        \int_0^{\infty} t\, \capac_s(\{x\in I_0 \colon f(x)\geq t\}) \, dt \lesssim \|f\|_{H^s}^2
    \end{equation*}
    for any $f \in H^s$. 
\end{prop}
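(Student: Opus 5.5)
Our approach is the classical Maz'ya truncation argument, adapted to the dyadic form of the norm in Lemma \ref{LeftRightFormula}. First, we may assume $f\ge 0$. Indeed, $\{f\ge t\}=\{f^+\ge t\}$ for $t>0$, and $f^+=\psi\circ f$ with $\psi(r)=\max(r,0)$ Lipschitz and $\psi(0)=0$. So Lemma \ref{LipschitzLemma} gives $\|f^+\|_{H^s}\lesssim\|f\|_{H^s}$.

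Next, we discretize the integral. Since $t\mapsto \capac_s(\{f\ge t\})$ is nonincreasing by Proposition \ref{Prop:Capacity_properties}(2), we have
$$
\int_0^\infty t\,\capac_s(\{f\ge t\})\,dt\lesssim \sum_{k\in\mathbb{Z}}2^{2k}\capac_s(\{f\ge 2^{k}\}).
$$
For each $k$, we set
$$
f_k:=\min\big(\max(f-2^{k-1},0),\,2^{k-1}\big),
$$
which is a Lipschitz (constant $1$) function of $f$ vanishing at $0$. Then $2^{1-k}f_k\ge 1$ on $\{f\ge 2^k\}$, so $2^{1-k}f_k\in\mc A(\{f\ge 2^k\})$. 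Hence
$$
2^{2k}\capac_s(\{f\ge 2^k\})\le 4\|f_k\|_{H^s}^2 .
$$
It therefore suffices to prove $\sum_k\|f_k\|_{H^s}^2\lesssim\|f\|_{H^s}^2$.

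We treat the $L^2$ part and the difference part separately. The $L^2$ part is immediate. The sets $\{2^{k-1}<f\le 2^k\}$ are disjoint, and $f_k\le 2^{k-1}\mathbbm 1_{\{f>2^{k-1}\}}$. Thus
$$
\sum_k\|f_k\|_{L^2}^2\le\int\sum_{k:\,2^{k-1}<f(x)}4^{k-1}\,dx\lesssim\|f\|_{L^2}^2 .
$$
For the seminorm, Lemma \ref{LeftRightFormula} reduces matters to the pointwise inequality
$$
\sum_k|f_k(x)-f_k(y)|^2\le C|f(x)-f(y)|^2
$$
for all $x,y$; this is the main step. Say $f(y)\le f(x)$. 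Each $f_k=\phi_k\circ f$, where $\phi_k$ is $1$-Lipschitz with derivative supported in $[2^{k-1},2^k]$. These intervals have bounded overlap (each point lies in at most two of them). So
$$
|f_k(x)-f_k(y)|\le |[f(y),f(x)]\cap[2^{k-1},2^k]|=:\ell_k,
$$
and $\sum_k\ell_k\le 2|f(x)-f(y)|$. Consequently
$$
\sum_k\ell_k^2\le\big(\textstyle\sum_k\ell_k\big)^2\le 4|f(x)-f(y)|^2 .
$$
Summing against $|I|^{-1-2s}$ over $I_-\times I_+$ then gives
$$
\sum_k\|f_k\|_{\dot H^s}^2\lesssim\|f\|_{\dot H^s}^2 .
$$

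Combining the $L^2$ bound and the seminorm bound finishes the proof. We expect the only delicate point to be the pointwise almost-orthogonality estimate. It works precisely because the $\dot H^s$ seminorm is, by Lemma \ref{LeftRightFormula}, a positive combination of $|f(x)-f(y)|^2$. This is why we use that formula rather than Haar coefficients, where truncations would not decouple.
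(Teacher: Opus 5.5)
Your proof is correct and follows the paper's argument essentially step for step: the same dyadic discretization of the integral, the same truncations (your $f_k$ is exactly half of the paper's $g_k=2^k\phi(|f|/2^k)$, with $f^+$ in place of $|f|$), and the same reduction via Lemma \ref{LeftRightFormula} to the pointwise bound $\sum_k|f_k(x)-f_k(y)|^2\lesssim|f(x)-f(y)|^2$. Where you differ is the proof of that pointwise bound: your argument uses the essentially disjoint supports of the $\phi_k'$, which is cleaner than the paper's case analysis ($m=n$ versus $m\le n-1$), and it even gives $\sum_k\ell_k\le|f(x)-f(y)|$ without the factor $2$.
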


\begin{proof}
    Our proof is inspired by \cite{W1999}*{Theorem 1}. Let $f \in H^s$ and $N_t := \{x\in I_0\colon f(x)\geq t\}$. By monotonicity of $\text{Cap}_s$, we have \looseness=-1
    \begin{align*}
        \int_0^{\infty} t\, \capac_s(\{x\in I_0\colon f(x)\geq t\}) \, dt & = \sum_{k=-\infty}^\infty \int_{2^k}^{2^{k+1}} t\, \capac_s(N_t) \, dt \\
        & \leq \sum_{k=-\infty}^\infty \int_{2^k}^{2^{k+1}} t\,\capac_s(N_{2^k}) \, dt \\
        & = \frac{1}{2}\sum_{k=-\infty}^\infty \capac_s(N_{2^k})  (2^{2(k+1)}-2^{2k}) \\
        & =  \frac{3}{2}\sum_{k=-\infty}^\infty 2^{2k} \capac_s(N_{2^k}). 
    \end{align*}
    Let $\phi\colon [0,\infty)\rightarrow \mathbb{R}$ be the piecewise linear function such that
    \begin{equation*}
        \phi(x)=
        \begin{cases}
            0 & 0\leq x\leq \frac{1}{2} \\
            1 & x\geq 1
        \end{cases},
    \end{equation*}
    and let $\displaystyle g_k:= 2^k \phi \bigg( \frac{|f|}{2^k} \bigg)$. Clearly, $\phi$ is Lipschitz with $\|\phi\|_{\text{Lip}} = 2$, and so, by Lemma \ref{LipschitzLemma}, $\displaystyle \frac{g_k}{2^k}=  \phi \left( \frac{|f|}{2^k} \right) \in \mc A(N_{2^k})$ and $\{ g_k \neq 0\} \subseteq \{|f|>2^{k-1}\}$. 

    Therefore,
    \begin{align*}
        \sum_{k=-\infty}^\infty 2^{2k} \capac_s (N_{2^k}) & \leq \sum_{k=-\infty}^\infty 2^{2k} \bigg \| \frac{g_k}{2^k} \bigg\|^2_{H^s} \\
        & =  \sum_{k=-\infty}^\infty 2^{2k}  \int_{I_0} \bigg|D^s\bigg(\frac{g_k(x)}{2^k} \bigg) \bigg|^2 \, dx + \sum_{k=-\infty}^\infty 2^{2k} \int_{I_0} \bigg| \frac{g_k(x)}{2^k} \bigg|^2 \, dx \\
         & = \sum_{k=-\infty}^\infty \int_{I_0} |D^sg_k (x)|^2 \,dx + \sum_{k=-\infty}^\infty \int_{I_0} |g_k (x)|^2 \,dx .
    \end{align*}
    For the second sum above, we get
    \begin{align*}
         \sum_{k=-\infty}^\infty \int_{I_0} |g_k (x)|^2 \,dx &=  \sum_{k=-\infty}^\infty \int_{\{g_k\neq 0\} } |g_k (x)|^2 \,dx \\
         & \leq \sum_{k=-\infty}^\infty \int_{\{|f|>2^{k-1}\} } |g_k (x)|^2 \,dx \\
         & \leq \sum_{k=-\infty}^\infty \int_{\{|f|>2^{k-1}\} } 2^{2k} \,dx = \sum_{k=-\infty}^\infty 2^{2k}|\{|f|>2^{k-1}\}| \\
         &= 4\sum_{l=-\infty}^\infty 2^{2l}|\{|f|>2^l\}| = 4\sum_{l=-\infty}^\infty \int_{2^{l-1}}^{2^l}t|\{|f|>2^l\}| \,dt  \\
         & \leq 4\sum_{l=-\infty}^\infty \int_{2^{l-1}}^{2^l}t|\{|f|>t\}| \,dt = 4\int_0^{\infty} t|\{|f|>t\}| \,dt \\
         & \approx \|f\|_{L^2}^2.
    \end{align*}
    Therefore, by Lemma \ref{LeftRightFormula}, we have a bound by
    \begin{align*}
        \sum_{k=-\infty}^\infty \int_{I_0} |D^sg_k (x)|^2 \,dx + \|f\|_{L^2}^2 & \approx \sum_{k=-\infty}^\infty \sum_{I \in \mathcal{D}} |I|^{-1-2s}\iint_{I_-\times I_+}|g_k(x)-g_k(y)|^2\,dxdy + \|f\|_{L^2}^2,
    \end{align*}
    and so it is enough to show that
    \begin{equation*}
        \sum_{k=-\infty}^\infty |g_k(x)-g_k(y)|^2 \lesssim |f(x) - f(y)|^2
    \end{equation*}
    for almost every $x,y\in I_0$. Without loss of generality, suppose $x\in N_{2^{m-1}}\setminus N_{2^m}$ and $y\in N_{2^{n-1}}\setminus N_{2^n}$, where $m\leq n$. In particular, $2^{m-1} \leq |f(x)| < 2^m$ and $2^{n-1} \leq |f(y)| < 2^n$, and consider the cases where $m=n$ and where $m\leq n-1$. 

    In the case $m=n$, we have that 
    $$
        |g_k(x)-g_k(y)| = \begin{cases}
            0 & k< m\\
            2^m\left(\phi \left(\frac{|f(x)|}{2^m}\right) - \phi \left(\frac{|f(y)|}{2^m}\right) \right) & k = m\\
            0 & k>m
        \end{cases}.
    $$ 
    By Lemma \ref{LipschitzLemma} applied to $\phi$ and the reverse triangle inequality, we get
    \begin{align*}
        \sum_{k=-\infty}^\infty|g_k(x)-g_k(y)|^2 & =  2^{2m}\left(\phi \left(\frac{|f(x)|}{2^m}\right) - \phi \left(\frac{|f(y)|}{2^m}\right) \right)^2\\
        &\lesssim 2^{2m}\bigg(\frac{|f(x)|}{2^m} - \frac{|f(y)|}{2^m}\bigg)^2\\
        & =||f(x)|-|f(y)||^2\\
        &\lesssim |f(x)-f(y)|^2.
    \end{align*}    

    In the case $m\leq n-1$, we have that 
    $$
        |g_k(x)-g_k(y)| = \begin{cases}
            0 & k<m\\
            |2^m\phi\left( \frac{|f(x)|}{2^m} \right) - 2^m| & k = m\\
             2^k & m < k< n\\
            2^n\phi\left( \frac{|f(y)|}{2^n} \right) & k = n\\
            0 & k>n
        \end{cases}.
    $$
    By Lemma \ref{LipschitzLemma} applied to $\phi$ twice, the reverse triangle inequality, and noticing that $\phi(1)=1$ and $\phi(\frac{1}{2})=0$, we get
    \begin{align*}
        \sum_{k=-\infty}^\infty|g_k(x)-g_k(y)|^2
        & = 2^{2m}\bigg| \phi\left( \frac{|f(x)|}{2^m} \right) - 1\bigg|^2 + 2^{2n} \bigg|\phi\left( \frac{|f(y)|}{2^n} \right)\bigg|^2 + \sum_{k=m+1}^{n-1}2^{2k} \\
         & = 2^{2m}\bigg| \phi\left( \frac{|f(x)|}{2^m} \right) - \phi(1)\bigg|^2 + 2^{2n} \bigg|\phi\left( \frac{|f(y)|}{2^n} \right) - \phi\left(\frac{1}{2}\right)\bigg|^2 + \sum_{k=m+1}^{n-1}2^{2k} \\
         & \lesssim 2^{2m} \bigg| \frac{|f(x)|}{2^m} - 1 \bigg|^2 +  2^{2n} \bigg| \frac{|f(y)|}{2^n} - \frac{1}{2} \bigg|^2 + \sum_{k=m+1}^{n-1}2^{2k} \\
         & = ||f(x)|-2^m|^2 +  ||f(y)|-2^{n-1}|^2 + \sum_{k=m+1}^{n-1}2^{2k}  \\
         &\leq ||f(x)|-|f(y)||^2 + ||f(y)|-|f(x)||^2 + \sum_{k=m+1}^{n-1}2^{2k}  \\
         & \lesssim |f(x)-f(y)|^2 + \sum_{k=m+1}^{n-1}4^{k}. 
    \end{align*}
    Elementary estimates show that
    $$
        \sum_{k=m+1}^{n-1}4^{k} = \frac{4^{m+1}}{3}(4^{n-m-1} -1) \leq 16( 2^{n-1} - 2^m )^2, 
    $$
    and so, by the reverse triangle inequality, we have that 
    \begin{align*}
        \sum_{k=m+1}^{n-1}4^{k} &\lesssim ( 2^{n-1} - 2^m )^2 \leq ||f(x)| - |f(y)||^2\leq |f(x) - f(y)|^2,
    \end{align*}
    completing the proof.
\end{proof}

Recall that the dyadic maximal operator, $M$, is given by 
$$
    Mf = \sup_{I\in \mc D} \la |f| \ra_I \unit_I.
$$
We will need the well-known fact that $\|M\|_{L^2\rightarrow L^2}\leq 1$, as well as the following bound for the composition of the dyadic maximal operator and the dyadic fractional integral operator.  
\begin{lemma} \label{Lemma: MT^s leq T^sM}
    If $s \in (0,1)$, then 
    $$
        MT^sf(x) \lesssim  T^sMf(x)
    $$
    for every nonnegative $f \in L^2$ and almost every $x \in I_0$. 
\end{lemma}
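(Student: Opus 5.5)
Since $f\ge 0$, $T^sf=\sum_{J\in\mathcal D}|J|^s\langle f\rangle_J\unit_J$ is a nonnegative function, and so is $T^sMf$. Fix a point $x$ (not a dyadic endpoint) and a dyadic interval $I\ni x$. It suffices to show
$$
\langle T^sf\rangle_I\lesssim T^sMf(x)
$$
with a constant independent of $I$ and $x$; taking the supremum over $I\ni x$ then gives the lemma.

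Split the sum defining $T^sf$ according to the position of $J$ relative to $I$. A dyadic $J$ meeting $I$ either contains $I$ or is strictly contained in it; the rest are disjoint from $I$ and contribute nothing to $\langle T^sf\rangle_I$. This gives
$$
\langle T^sf\rangle_I=\sum_{J\supseteq I}|J|^s\langle f\rangle_J+\frac{1}{|I|}\sum_{J\subsetneq I}|J|^{s}\langle f\rangle_J|J|.
$$

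For the first sum, every $J\supseteq I$ contains $x$. Also $\langle f\rangle_J\le \langle Mf\rangle_J$, since $Mf\ge f$ pointwise for $f\ge 0$ (dyadic Lebesgue differentiation). Hence the first sum is at most
$$
\sum_{J\ni x}|J|^s\langle Mf\rangle_J\unit_J(x)\le T^sMf(x).
$$

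For the second sum, group the intervals $J\subsetneq I$ by generation, $J\in\mathcal D_k(I)$ with $k\ge1$, so that $|J|=2^{-k}|I|$. For fixed $k$,
$$
\frac{1}{|I|}\sum_{J\in\mathcal D_k(I)}|J|\langle f\rangle_J=\frac{1}{|I|}\int_I f=\langle f\rangle_I .
$$
The $k$-th generation therefore contributes exactly $(2^{-k}|I|)^s\langle f\rangle_I$. Summing the geometric series over $k\ge 1$ bounds the second sum by $C_s|I|^s\langle f\rangle_I$.

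It remains to compare $|I|^s\langle f\rangle_I$ with $T^sMf(x)$. Since $\langle f\rangle_I\le Mf$ on all of $I$, we have $\langle f\rangle_I\le\langle Mf\rangle_I$. The term with $J=I$ in the sum defining $T^sMf(x)$ is $|I|^s\langle Mf\rangle_I$, which is therefore at least $|I|^s\langle f\rangle_I$. Combining the two pieces gives $\langle T^sf\rangle_I\le (1+C_s)\,T^sMf(x)$.

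The only step needing care is the second one, handling the small intervals inside $I$ via the generation-wise identity and geometric summation. Everything else is bookkeeping. Since $Mf\in L^2$, the quantity $T^sMf$ is defined, and the inequality holds at every non-endpoint $x$, hence almost everywhere.
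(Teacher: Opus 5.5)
Your proof is correct and follows the paper's route. You split $\langle T^sf\rangle_I$ into ancestors and descendants of $I$, bound the ancestor terms by $\langle Mf\rangle_J$, and collapse the descendants generation by generation into a geometric series times $|I|^s\langle f\rangle_I\le |I|^s\langle Mf\rangle_I$; the only difference is that you put $J=I$ among the containing intervals (hence your constant $1+C_s$), while the paper puts it in the contained sum.
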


\begin{proof}
    For $J \in \mc D$, we will prove the pointwise inequality $\langle T^s f\rangle_J\mathbbm{1}_J \lesssim T^sMf$. First, by the definition of $T^s$, we have 
    \begin{align*}
        \la T^sf \ra_J \mathbbm{1}_J &= \sum_{I \in \mathcal{D}} |I|^s \la f\ra_I \la \unit_I \ra_J \mathbbm{1}_J = \sum_{I \subseteq J} |I|^s \la f\ra_I \frac{|I|}{|J|} \mathbbm{1}_J + \sum_{I \supsetneq J} |I|^s \la f\ra_I \mathbbm{1}_J.
    \end{align*}
    While the second term is clearly bounded by $\displaystyle\sum_{I \supsetneq J} |I|^s \la Mf\ra_I  \mathbbm{1}_I$, we estimate the first term by 
    \begin{align*}
        \sum_{I \subseteq J} |I|^s \la f\ra_I \frac{|I|}{|J|} & = \sum_{k=0}^\infty \sum_{I \in \mc D_k(J)} (2^{-k}|J|)^s \la f\ra_I \frac{2^{-k}|J|}{|J|} \\
         & = |J|^s \sum_{k=0}^\infty 2^{-k(s+1)} \sum_{I \in \mc D_k(J)} \la f \ra_I\\
         &= |J|^s \sum_{k=0}^\infty 2^{-k(s+1)}  \frac{1}{2^{-k}|J|} \sum_{I \in \mc D_k(J)} \int_I f \,dx\\
         &= |J|^s \la f \ra_J \sum_{k=0}^\infty 2^{-ks}  \\
         &\approx|J|^s \la f \ra_J\\
         &\leq |J|^s \langle Mf\rangle_J.
    \end{align*}
    Therefore, 
    $$
        \langle T^sf\rangle_J\mathbbm{1}_J \lesssim |J|^s\langle Mf\rangle_J\mathbbm{1}_J + \sum_{I \supsetneq J} |I|^s\langle Mf\rangle_I\mathbbm{1}_I \leq \sum_{I \in \mathcal{D}} |I|^s\langle Mf\rangle_I\mathbbm{1}_I = T^sMf.
    $$
    Taking a supremum over all $J \in \mathcal{D}$ yields the result. 
\end{proof}

We now prove the dyadic fractional Carleson embedding theorem, Theorem \ref{Thm:Carleson_lemma}.
\begin{proof}[Proof of Theorem \ref{Thm:Carleson_lemma}]   
    Let $C$ denote the infimum of all admissible constants in \eqref{Eq:Carleson_lemma}. To show the necessity of the $s$-Carleson condition, let $\{I_k\}_{k=1}^n \subseteq \mc D$ be a pairwise disjoint collection and $f \in \mc A (\bigcup_{k=1}^n I_k)$. Since $f \geq 1$ almost everywhere on $\bigcup_{k=1}^n I_k$ and the $I_k$ are pairwise disjoint, we have $f \geq 1$ on each $I_k$. In particular, $\la f \ra_J^2 \geq 1$ for all $J \subseteq \bigcup_{k=1}^nI_k$. Therefore, by the hypothesis, we have \looseness=-1
    \begin{equation*}
        \sum_{k=1}^n \sum_{J \in \mathcal{D}(I_k)} \mu (J) \leq \sum_{k=1}^n \sum_{J \in \mathcal{D}(I_k)} \la f \ra_J^2  \mu (J) \leq \sum_{J\in \mc D} \la f \ra_J^2  \mu (J)  \leq C \| f\|_{H^s}^2.
    \end{equation*}
    Since $f \in \mc A (\bigcup_{k=1}^n I_k)$ was arbitrary, we get  
    $$
        \sum_{k=1}^n \sum_{\substack{J \in \mathcal{D}\\ J \subseteq I_k}} \mu (J) \leq C \text{Cap}_s\bigg(\bigcup_{k=1}^n I_k\bigg),
    $$
    and since $\{I_k\}_{k=1}^n \subseteq \mathcal{D}$ was an arbitrary pairwise disjoint collection, we have that $\{\mu(I)\}_{I \in \mathcal{D}}$ is an $s$-Carleson sequence with $\|\mu\|_{s\text{C}}^2 \leq C$.

    To show the sufficiency of the $s$-Carleson condition, let $f \in H^s$ and set $E_t := \{ x\in I_0 \colon Mf(x) >t\}$. Clearly, $E_t$ is an open set for every $t$, and thus it can be written as a union of pairwise disjoint dyadic intervals 
    $$
        E_t = \bigcup_{k=1}^{\infty} I_k.
    $$
    Now, by Fubini's theorem, the definition of $E_t$, the $s$-capacitary Carleson condition, Proposition \ref{Prop:Capacity_properties}, the fact that $f \in H^s$ implies that $f= T^sg$ for some $g \in L^2$, Lemma \ref{Lemma: MT^s leq T^sM}, Proposition \ref{Prop: Capac_Ineq}, Lemma \ref{Lemma:f=T^s(D^s)f}, and the bound $\|M\|_{L^2\rightarrow L^2} \leq 1$, we have that \looseness=-1
    \begin{align*}
        \sum_{I\in \mc D} \la f \ra_I^2 \, \mu(I) \approx  \sum_{I \in \mathcal{D}} &\int_0^{\la f\ra_I} t\,dt \,\mu(I) = \int_0^\infty t \,\sum_{\substack{I\in \mc D\\ \la f\ra_I >t}} \mu (I)  \,dt \\
        &\leq \int_0^\infty t \, \sum_{I \subseteq E_t} \mu (I)  \,dt \\
        & = \int_0^\infty t \, \sum_{k=1}^{\infty} \sum_{I \in \mathcal{D}(I_k)} \mu (I)  \,dt \\
        & = \int_0^\infty t \, \lim_{n\rightarrow\infty}\sum_{k=1}^{n} \sum_{I \in \mathcal{D}(I_k)} \mu (I)  \,dt \\
        &\leq \|\mu\|_{s\text{C}}^2 \int_0^\infty  t \, \lim_{n\rightarrow\infty}\capac_s\bigg( \bigcup_{k=1}^n I_k \bigg)  \, dt \\
        &\leq \|\mu\|_{s\text{C}}^2 \int_0^\infty  t \, \capac_s\bigg( \bigcup_{k=1}^{\infty} I_k \bigg)  \, dt \\
        & = \|\mu\|_{s\text{C}}^2 \int_0^\infty t \, \capac_s\big(\{ x\in I_0 \colon MT^sg(x) >t\} \big) \, dt \\
        & \lesssim \|\mu\|_{s\text{C}}^2 \int_0^\infty  t\, \capac_s\big(\{ x\in I_0 \colon T^sMg(x) >t\} \big) \, dt \\
        & \lesssim \|\mu\|_{s\text{C}}^2 \|T^sMg\|_{H^s}^2 \\
        &\approx \|\mu\|_{s\text{C}}^2 \|Mg\|_{L^2}^2 \\
        &\leq \|\mu\|_{s\text{C}}^2 \|g\|_{L^2}^2\\
        &\leq \|\mu\|_{s\text{C}}^2 \|f\|_{H^s}^2.
    \end{align*}
    Thus $C \lesssim \|\mu\|_{s\text{C}}^2$ and the proof is complete. 
\end{proof}


\subsection{Proof of Theorem \ref{Thm: ParaproductInequality}}

We now prove our main result, Theorem \ref{Thm: ParaproductInequality}.
\begin{proof}[Proof of Theorem \ref{Thm: ParaproductInequality}]
    First, by the orthonormality of the $h_I$, we have that
    \begin{align*}
            \|\Pi_bf\|_{H^s}^2 & = \|D^s(\Pi_bf)\|_{L^2}^2 + \la\Pi_bf\ra_{I_0}^2 \\
            & = \sum_{I\in \mc D} |I|^{-2s} |(b,h_I)|^2 \la f \ra_I^2 
    \end{align*}
    for any $b \in L^1$ and $f \in H^s$. The result follows from Theorem \ref{Thm:Carleson_lemma}, which gives that the bound 
    \begin{align}\label{Eq:ParaproductInequality}
        \|\Pi_bf\|_{H^s} \lesssim \|f\|_{H^s}
    \end{align}
    for all $f \in H^s$ is equivalent to the $s$-capacitary Carleson condition of $\mu (I) = |I|^{-2s} |(b,h_I)|^2$ and that the optimal implicit constant in \eqref{Eq:ParaproductInequality} is comparable to $\|\mu\|_{s\text{C}} = \|b\|_{\text{BMO}^s}$.

    For the compactness portion of the result, we use the fact that $\Pi_b$ is compact on $H^s$ if and only if the finite rank operators, $\Pi_{b,N}$, defined by 
    $$
        \Pi_{b,N}f = \sum_{I \in \mathcal{D}_N} (b,h_I)\langle f\rangle_I h_I,
    $$
    converge to $\Pi_b$ uniformly as operators on $H^s$; namely, that 
    $$
        \lim_{N\rightarrow \infty} \sup_{\substack{f \in H^s\\ \|f\|_{H^s}\leq 1}} \|\Pi_bf - \Pi_{b,N}f\|_{H^s} = 0.
    $$
    As before, the orthonormality of the $h_I$ gives 
    \begin{align*}
        \|\Pi_bf - \Pi_{b,N}f\|_{H^s}^2 = \bigg\|\sum_{I \in \mathcal{D}_N^c} (b,h_I)\langle f\rangle_Ih_I\bigg\|_{H^s}^2
        = \sum_{I \in \mathcal{D}_N^c} |I|^{-2s}|(b,h_I)|^2\langle f\rangle_I^2,
    \end{align*}
    and so, applying Theorem \ref{Thm:Carleson_lemma} with $\mu (I) = \begin{cases}
            |I|^{-2s} |(b,h_I)|^2 & I \in \mathcal{D}_N^c\\
            0 & I \in \mathcal{D}_N
            \end{cases}$ gives that
    \begin{align*}
        \sup_{\substack{f \in H^s\\ \|f\|_{H^s}\leq1}} \bigg\|\sum_{I \in \mathcal{D}_N^c} (b,h_I)\langle f\rangle_Ih_I\bigg\|_{H^s} \approx \sup_{\{I_k\}_{k=1}^n} \frac{1}{\text{Cap}_s\big(\bigcup_{k=1}^nI_k\big)}\sum_{k=1}^n\sum_{J \in \mathcal{D}(I_k)\cap\mathcal{D}_N^c} |J|^{-2s}|(b,h_J)|^2,
    \end{align*}
    which converges to $0$ as $N\rightarrow \infty$ if and only if $b \in \text{CMO}^s$. 
\end{proof}

In our applications, we use the dyadic Bony decomposition of the product of two functions 
\begin{align}\label{BonyDecomposition}
    fg = \Pi_gf + \Pi_fg + \widetilde{\Pi}_gf,
\end{align}
where the operator $\widetilde{\Pi}_b$ is given by
$$
    \widetilde{\Pi}_bf = \sum_{I \in \mathcal{D}} (f , h_I) (b,h_I) \frac{\mathbbm{1}_{I}}{|I|} + \langle f\rangle_{I_0}\langle b\rangle_{I_0}.
$$
The $H^s$-boundedness/compactness of $\widetilde{\Pi}_b$ also correspond to $b$ belonging to $\text{BMO}_s$/$\text{CMO}_s$.
\begin{prop}\label{AdjointParaproduct}
If $s \in (0,1)$ and $b \in \text{BMO}^s$, then $\widetilde{\Pi}_b$ is bounded on $H^s$ with 
$$
    \|\widetilde{\Pi}_b\|_{H^s\rightarrow H^s} \lesssim \|b\|_{\text{BMO}^s}.
$$
Moreover, if $b \in \text{CMO}^s$, then $\widetilde{\Pi}_b$ is compact on $H^s$.

\end{prop}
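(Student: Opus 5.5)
The plan is to estimate $\|\widetilde{\Pi}_b f\|_{H^s}$ directly through its Haar coefficients, using only the $\text{BMO}^s$ condition on \emph{single} dyadic intervals together with an upper bound for $\text{Cap}_s(J)$. Write $F:=\widetilde{\Pi}_b f$, $f_I:=(f,h_I)$, $b_I:=(b,h_I)$. Since $(\mathbbm{1}_I/|I|,h_J)=h_J(I)$ when $J\supsetneq I$ and vanishes otherwise, we get
$$
(F,h_J)=\sum_{I\subsetneq J} f_I\, b_I\, h_J(I), \qquad |(F,h_J)|\le |J|^{-1/2}\sum_{I\subsetneq J}|f_I||b_I|.
$$
By Parseval, $\|F\|_{H^s}^2\approx \sum_J |J|^{-2s}|(F,h_J)|^2+\langle F\rangle_{I_0}^2$, so we bound the two pieces separately.

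For the main piece, apply Cauchy--Schwarz with the weights $|I|^{\mp s}$:
$$
\Big(\sum_{I\subsetneq J}|f_I||b_I|\Big)^2\le \Big(\sum_{I\in\mathcal D(J)}|I|^{-2s}|b_I|^2\Big)\Big(\sum_{I\subsetneq J}|I|^{2s}|f_I|^2\Big)\le \|b\|_{\text{BMO}^s}^2\,\text{Cap}_s(J)\sum_{I\subsetneq J}|I|^{2s}|f_I|^2,
$$
using Definition \ref{BMOsCMOsDefinition} with the one-interval collection $\{J\}$. Next, I need the upper bound $\text{Cap}_s(J)\lesssim \min(1,|J|^{1-2s})$ for \emph{all} $s\in(0,1)$, including $s=\frac12$. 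Both test functions from the proof of Proposition \ref{Cap(I)_estimate} give it: $\mathbbm{1}_{I_0}$ yields $\text{Cap}_s(J)\le 1$, and the computation with $\mathbbm{1}_J$ yields $\text{Cap}_s(J)\lesssim |J|^{1-2s}$ for every $s$. Inserting this and interchanging sums gives
$$
\sum_J|J|^{-2s}|(F,h_J)|^2\lesssim \|b\|_{\text{BMO}^s}^2\sum_{I}|I|^{2s}|f_I|^2\sum_{J\supsetneq I}\min(1,|J|^{1-2s})\,|J|^{-1-2s}.
$$
This inner geometric sum over the ancestors of $I$ is the key computation. For $s<\frac12$ the summand is $|J|^{-4s}$, so the sum is $\approx|I|^{-4s}$. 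For $s\ge\frac12$ the summand is $|J|^{-1-2s}$, so the sum is $\approx|I|^{-1-2s}$, and then $|I|^{-1}\le|I|^{-2s}$. In either case the weight $|I|^{2s}$ becomes $\lesssim|I|^{-2s}$, so the whole expression is $\lesssim\|b\|_{\text{BMO}^s}^2\|f\|_{H^s}^2$.

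The average $\langle F\rangle_{I_0}=\sum_I f_Ib_I+\langle f\rangle_{I_0}\langle b\rangle_{I_0}$ is handled the same way: the same Cauchy--Schwarz with $J=I_0$ bounds the sum by $\|b\|_{\text{BMO}^s}\text{Cap}_s(I_0)^{1/2}\|f\|_{H^s}$. The term $\langle f\rangle_{I_0}\langle b\rangle_{I_0}$ is the one genuine subtlety. Since $\text{BMO}^s$ identifies functions differing by a constant, I would normalize the representative $b$ by $\langle b\rangle_{I_0}=0$, so that this term vanishes; alternatively one can carry it as the harmless rank-one operator $f\mapsto\langle f\rangle_{I_0}\langle b\rangle_{I_0}$, which is bounded by $|\langle b\rangle_{I_0}|$.

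For compactness, write $\widetilde{\Pi}_b-\widetilde{\Pi}_{b,N}$ for the operator that keeps only the terms with $I\in\mathcal D_N^c$; the truncated operator $\widetilde{\Pi}_{b,N}$ has finite rank. Repeating the argument above with $b_I$ replaced by $b_I\mathbbm{1}_{\mathcal D_N^c}(I)$ gives the operator-norm bound
$$
\sup_{I}\Big(\text{Cap}_s(I)^{-1}\sum_{J\in\mathcal D(I)\cap\mathcal D_N^c}|J|^{-2s}|b_J|^2\Big)^{1/2}.
$$
This tends to $0$ as $N\to\infty$ when $b\in\text{CMO}^s$, so $\widetilde{\Pi}_b$ is a norm limit of finite-rank operators and hence compact. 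The main obstacle I expect is the capacity bookkeeping in the ancestor sum, in particular checking uniformly in $s$, including the borderline $s=\frac12$, that the trivial capacity bounds suffice. The Carleson embedding (Theorem \ref{Thm:Carleson_lemma}) should not be needed here, because only single-interval Carleson sums enter.
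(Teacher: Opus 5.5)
Your proof is correct, and its core is the paper's argument: compute the Haar coefficients of $\widetilde{\Pi}_b f$, apply Cauchy--Schwarz with weights $|I|^{\mp s}$, use the $\text{BMO}^s$ condition on the single interval $J$ together with $\text{Cap}_s(J)\lesssim\min(1,|J|^{1-2s})$, and sum the geometric series over ancestors with the split at $s=\frac12$. The differences are only in bookkeeping. The paper controls $\|\widetilde{\Pi}_bf\|_{L^2}$, and for compactness the $L^2$ tail, via $\text{BMO}^s\subseteq\text{BMO}$, $\text{CMO}^s\subseteq\text{CMO}$ and the classical $L^2$ theory of $\widetilde{\Pi}_b$, and truncates on the output side. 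You instead bound the single scalar $\langle \widetilde{\Pi}_bf\rangle_{I_0}$ by the same Cauchy--Schwarz and truncate the symbol's Haar coefficients to get explicit finite-rank approximants, which makes your argument self-contained.
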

\begin{proof}
Let $f \in H^s$ and $b \in \text{BMO}^s$, assuming without loss of generality that $\langle b \rangle_{I_0}=0$. Then 
\begin{align*}
    \|\widetilde{\Pi}_bf\|_{\dot{H}^s}^2 &= \sum_{I \in \mathcal{D}} |I|^{-2s}|(\widetilde{\Pi}_bf, h_I)|^2\\
    &= \sum_{I \in \mathcal{D}} |I|^{-2s} \bigg| \sum_{\substack{J \in \mathcal{D}\\ J \subsetneq I}} (f,h_J)(b,h_J)h_I(J)\bigg|^2\\
    &\leq \sum_{I \in \mathcal{D}} |I|^{-2s-1} \bigg(\sum_{J \in \mathcal{D}(I)} |J|^{2s}|(f,h_J)|^2\bigg)\bigg(\sum_{J \in \mathcal{D}(I)} |J|^{-2s}|(b,h_J)|^2\bigg)\\
    &\leq \|b\|_{\text{BMO}^s}^2\sum_{I \in \mathcal{D}} \text{Cap}_{s}(I)|I|^{-2s-1}\sum_{J \in \mathcal{D}(I)}|J|^{2s}|(f,h_J)|^2.
\end{align*}
We first consider the case $s \in (0,\frac{1}{2})$. By Proposition \ref{Cap(I)_estimate}, the above is, up to constants, equivalent to 
\begin{align*}
    \|b\|_{\text{BMO}^s}^2\sum_{I \in \mathcal{D}} |I|^{-4s}\sum_{J \in \mathcal{D}(I)}|J|^{2s}|(f,h_J)|^2 &= \|b\|_{\text{BMO}^s}^2 \sum_{J \in \mathcal{D}} |J|^{2s}|(f,h_J)|^2\sum_{\substack{I \in \mathcal{D}\\ I \supseteq J}} |I|^{-4s}\\
    &\lesssim \|b\|_{\text{BMO}^s}^2 \sum_{J \in \mathcal{D}} |J|^{-2s}|(f,h_J)|^2\\
    &= \|b\|_{\text{BMO}^s}^2\|f\|_{\dot{H}^s}^2.
\end{align*}
In the case $s \in [\frac{1}{2},1)$, we have that the above is controlled by
\begin{align*}
    \|b\|_{\text{BMO}^s}^2\sum_{I \in \mathcal{D}}|I|^{-2s-1}\sum_{J \in \mathcal{D}(I)}|J|^{2s}|(f,h_J)|^2 &= \|b\|_{\text{BMO}^s}^2 \sum_{J \in \mathcal{D}} |J|^{2s}|(f,h_J)|^2\sum_{\substack{I \in \mathcal{D}\\ I \supseteq J}} |I|^{-2s-1}\\
    &\approx \|b\|_{\text{BMO}^s}^2 \sum_{J \in \mathcal{D}} |J|^{-1}|(f,h_J)|^2\\
    &\leq \|b\|_{\text{BMO}^s}^2\sum_{J \in \mathcal{D}}|J|^{-2s}|(f,h_J)|^2\\
    &= \|b\|_{\text{BMO}^s}^2\|f\|_{\dot{H}^s}^2.
\end{align*}
Additionally, by Proposition \ref{BMOsContainment}, we have that $b \in \text{BMO}$ with $\|b\|_{\text{BMO}} \lesssim \|b\|_{\text{BMO}^s}$. By standard considerations $\widetilde{\Pi}_b$ is bounded on $L^2$ whenever $b \in \text{BMO}$, so
\begin{align*}
    \|\widetilde{\Pi}_bf\|_{L^2} = \bigg\|\sum_{I \in \mathcal{D}}(f,h_I)(b,h_I)\frac{\mathbbm{1}_I}{|I|}\bigg\|_{L^2} \lesssim \|b\|_{\text{BMO}}\|f\|_{L^2} \lesssim \|b\|_{\text{BMO}^s}\|f\|_{L^2}.
\end{align*}
Therefore 
$$
    \|\widetilde{\Pi}_bf\|_{H^s} = \|\widetilde{\Pi}_bf\|_{\dot{H}^s} + \|\widetilde{\Pi}_bf\|_{L^2} \lesssim \|b\|_{\text{BMO}^s}\|f\|_{\dot{H}^s} + \|b\|_{\text{BMO}^s}\|f\|_{L^2} = \|b\|_{\text{BMO}^s}\|f\|_{H^s}
$$
as desired.

We now prove that $\widetilde{\Pi}_b$ is compact on $H^s$ for $b \in \text{CMO}^s$. It suffices to show that 
$$
    \lim_{N\rightarrow \infty}\sup_{\substack{f \in H^s\\ \|f\|_{H^s} \leq 1}} \bigg\|\sum_{I \in \mathcal{D}_N^c}(\widetilde{\Pi}_bf,h_I)h_I\bigg\|_{H^s} = 0.
$$
Given $\epsilon >0$, use the $\text{CMO}^s$ hypothesis to find $N$ such that $\sum_{J \in \mathcal{D}(I)} |J|^{-2s}|(b,h_J)|^2 < \epsilon \text{Cap}_s(I)$ for any $I \in \mathcal{D}_N^c$. Let $f \in H^s$ with $\|f\|_{H^s}\leq 1$ and estimate as above: 
\begin{align*}
    \bigg\|\sum_{I \in \mathcal{D}_N^c}(\widetilde{\Pi}_bf,h_I)h_I\bigg\|_{\dot{H}^s} & 
    \leq \sum_{I \in \mathcal{D}_N^c} |I|^{-2s-1}\bigg(\sum_{J \in \mathcal{D}(I)} |J|^{2s}|(f,h_J)|^2\bigg)\bigg(\sum_{J \in \mathcal{D}(I)}|J|^{-2s}|(b,h_J)|^2\bigg)\\
    &< \epsilon \sum_{I \in \mathcal{D}} \text{Cap}_s(I)|I|^{-2s-1}\sum_{J \in \mathcal{D}(I)} |J|^{2s}|(f,h_J)|^2.
\end{align*}
We continue with the same exact steps as in the case of boundedness to deduce  
$$
    \bigg\|\sum_{I \in \mathcal{D}_N^c}(\widetilde{\Pi}_bf,h_I)h_I\bigg\|_{\dot{H}^s} \lesssim \epsilon\|f\|_{\dot{H}^s}\leq \epsilon.
$$
Finally, by Proposition \ref{BMOsContainment}, we have that $b \in \text{CMO}$, and hence $\widetilde{\Pi}_b$ is compact on $L^2$, so 
$$
    \bigg\|\sum_{I \in \mathcal{D}_N^c}(\widetilde{\Pi}_bf,h_I)h_I\bigg\|_{L^2} \lesssim \epsilon. 
$$
The result follows.
\end{proof}


\section{Applications} \label{Section:Applications}

\subsection{\texorpdfstring{\boldmath The algebra property of $H^s$ when $s \in (\frac{1}{2},1)$}{}}

We use Proposition \ref{HsContinuousEmbedding} to show that $\text{BMO}^s$ coincides with $\dot{H}^s$ when $s \in (\frac{1}{2},1)$. 
\begin{prop} \label{Prop:BMO^s=dotH^s}
If $s \in (\frac{1}{2},1)$, then $\text{BMO}^s = \dot{H}^s$ with equivalent norms.
\end{prop}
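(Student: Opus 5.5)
We prove the two inequalities $\|b\|_{\dot{H}^s} \lesssim \|b\|_{\text{BMO}^s}$ and $\|b\|_{\text{BMO}^s} \lesssim \|b\|_{\dot{H}^s}$ separately. The key input is that for $s \in (\frac{1}{2},1)$ all capacities of nonempty unions of dyadic intervals are comparable to $1$. Throughout we identify $b$ modulo constants, so only the Haar coefficients $(b,h_J)$ matter. Recall that $\|b\|_{\dot{H}^s}^2 = \sum_{J\in\mathcal{D}} |J|^{-2s}|(b,h_J)|^2$.

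\emph{Step 1: capacity bounds.} Let $\{I_k\}_{k=1}^n \subseteq \mathcal{D}$ be any finite pairwise disjoint collection. By monotonicity (Proposition \ref{Prop:Capacity_properties}(2)) and Proposition \ref{Cap(I)_estimate}, we have
$$
\text{Cap}_s\Big(\bigcup_{k=1}^n I_k\Big) \geq \text{Cap}_s(I_1) \gtrsim 1.
$$
The lower bound $\text{Cap}_s(I)\gtrsim 1$ comes from Proposition \ref{HsContinuousEmbedding}: any admissible $f$ satisfies $\|f\|_{H^s}\gtrsim \|f\|_{L^\infty}\geq 1$. In the other direction,
$$
\text{Cap}_s\Big(\bigcup_{k=1}^n I_k\Big) \leq \text{Cap}_s(I_0) \leq \|\unit_{I_0}\|_{H^s}^2 = 1.
$$

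\emph{Step 2: $\text{BMO}^s \lesssim \dot{H}^s$.} Since the $I_k$ are pairwise disjoint, the families $\mathcal{D}(I_k)$ are disjoint. Hence
$$
\sum_{k=1}^n \sum_{J\in\mathcal{D}(I_k)} |J|^{-2s}|(b,h_J)|^2 \leq \sum_{J\in\mathcal{D}} |J|^{-2s}|(b,h_J)|^2 = \|b\|_{\dot{H}^s}^2.
$$
Dividing by the capacity, which is $\gtrsim 1$ by Step 1, and taking the supremum over all such collections gives $\|b\|_{\text{BMO}^s}^2 \lesssim \|b\|_{\dot{H}^s}^2$.

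\emph{Step 3: $\dot{H}^s \lesssim \text{BMO}^s$.} Test the $\text{BMO}^s$ supremum on the single interval $I_0$. Since $\mathcal{D}(I_0)=\mathcal{D}$ and $\text{Cap}_s(I_0)\leq 1$,
$$
\|b\|_{\dot{H}^s}^2 = \sum_{J\in\mathcal{D}(I_0)} |J|^{-2s}|(b,h_J)|^2 \leq \frac{1}{\text{Cap}_s(I_0)}\sum_{J\in\mathcal{D}(I_0)} |J|^{-2s}|(b,h_J)|^2 \leq \|b\|_{\text{BMO}^s}^2.
$$

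\emph{Remaining issue.} The argument is short, and the only delicate point is membership: both spaces should be understood as sets of $L^1$ functions modulo constants. A function in $\dot{H}^s$ has square-summable Haar coefficients, so after fixing the mean it lies in $L^2\subseteq L^1$. Conversely, a function $b\in L^1$ with $\|b\|_{\text{BMO}^s}<\infty$ has $\sum_J |(b,h_J)|^2 \leq \sum_J|J|^{-2s}|(b,h_J)|^2<\infty$, so $b$ differs from an $L^2$ function by a constant. Once this is noted, the two estimates above give equality of the spaces with equivalent norms.
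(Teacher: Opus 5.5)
Your proof is correct and follows essentially the same route as the paper. Both arguments establish $\text{Cap}_s(U)\approx 1$ for every nonempty finite disjoint union $U$ of dyadic intervals: you sandwich it between $\text{Cap}_s(I_1)$ and $\text{Cap}_s(I_0)$ by monotonicity, while the paper repeats the argument of Proposition \ref{Cap(I)_estimate}. Both then test on $U=I_0$ for one direction and use the disjointness of the families $\mathcal{D}(I_k)$ for the other. Your remark on membership modulo constants is a correct detail that the paper leaves implicit.
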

\begin{proof}
    Recall that
    $$
        \|b\|_{\text{BMO}^s}^2 = \sup_{U} \frac{1}{\capac_s(U)}\sum_{I\subseteq U}|I|^{-2s}|(b,h_I)|^2,
    $$
    where the supremum is taken over all finite disjoint unions, $U$, of dyadic intervals. Using the exact same argument as in Proposition \ref{Cap(I)_estimate}, we get $\capac_s(U) \approx 1$ for every nonempty finite disjoint union, $U$, and therefore
    $$
        \|b\|_{\text{BMO}^s}^2 \approx \sup_{U} \sum_{I\subseteq U}|I|^{-2s}|(b,h_I)|^2.
    $$
    Taking $U=[0,1]$, we obtain
    $$
        \|b\|_{\text{BMO}^s}^2 \gtrsim  \sum_{I\in \mc D}|I|^{-2s}|(b,h_I)|^2 = \|b\|_{\dot{H}^s}^2.
    $$
    
    Conversely, for every finite disjoint union, $U$, we have
    $$
        \sum_{I\subseteq U}|I|^{-2s}|(b,h_I)|^2 \leq \sum_{I\in \mc 
        D}|I|^{-2s}|(b,h_I)|^2.
    $$
    Therefore,
    $$
        \|b\|_{\text{BMO}^s}^2 \lesssim \sum_{I\in \mc 
        D}|I|^{-2s}|(b,h_I)|^2 = \|b\|_{\dot{H}^s}^2,
    $$
    as desired.
\end{proof}

\begin{proof}[Proof of Theorem \ref{AlgebraCorollary}]
Let $s>\frac{1}{2}$ and $f,g \in H^s$. By Proposition \ref{Prop:BMO^s=dotH^s}, $f,g \in \text{BMO}^s$ and by the Bony decomposition, \eqref{BonyDecomposition}, we can write
$$
    fg =  \Pi_gf + \Pi_fg + \widetilde{\Pi}_gf .
$$
Since $f,g \in \text{BMO}^s$, by Proposition \ref{AdjointParaproduct} and Theorem \ref{Thm: ParaproductInequality}, we have
\begin{align*}
    \|fg\|_{H^s} &\leq \|\Pi_gf \|_{H^s} + \|\Pi_fg\|_{H^s} + \|\widetilde{\Pi}_gf\|_{H^s} \\
    &\lesssim \|g\|_{\text{BMO}^s}\|f\|_{H^s} + \|f\|_{\text{BMO}^s} \|g\|_{H^s} + \|g\|_{\text{BMO}^s} \|f\|_{H^s} \\
    & \approx \|g\|_{\dot{H}^s}\|f\|_{H^s} + \|f\|_{\dot{H}^s} \|g\|_{H^s} + \|g\|_{\dot{H}^s} \|f\|_{H^s} \\
    & \lesssim \|f\|_{H^s} \|g\|_{H^s},
\end{align*}
proving the result.
\end{proof}


\subsection{Commutators on local dyadic fractional Sobolev spaces}

We first show the boundedness of the Haar shift on $H^s$, and then we use the decomposition \eqref{BonyDecomposition}, Theorem \ref{Thm: ParaproductInequality}, and Proposition \ref{AdjointParaproduct} to establish the boundedness and compactness of $[b,\Sha]$. 

\begin{prop}\label{Prop:Haar_shifts_bddness}
    If $s\in (0,1)$, then $\Sha$ is bounded on $H^s$.
\end{prop}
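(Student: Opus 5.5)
Since $\Sha$ is given explicitly in the Haar basis, I will compute its Haar coefficients and use the norm equivalence $\|F\|_{H^s}^2 \approx \sum_{J\in\mathcal D}|J|^{-2s}|(F,h_J)|^2 + \la F\ra_{I_0}^2$ from Section \ref{PreliminariesSection}.

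First, each Haar function $h_K$ has mean zero, so $\la \Sha f\ra_{I_0}=0$, and the average term contributes nothing. Next, for $J\in\mathcal D$ with $J\neq I_0$, write $\hat J$ for its dyadic parent. By orthonormality,
$$
    (\Sha f,h_J)=\pm (f,h_{\hat J}),
$$
with sign $+$ if $J=\hat J_-$ and $-$ if $J=\hat J_+$. Also $(\Sha f,h_{I_0})=0$, because $I_0$ is not a child of any interval in $\mathcal D$.

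Therefore
$$
    \|\Sha f\|_{\dot H^s}^2=\sum_{I\in\mathcal D}\big(|I_-|^{-2s}+|I_+|^{-2s}\big)|(f,h_I)|^2 = 2^{1+2s}\sum_{I\in\mathcal D}|I|^{-2s}|(f,h_I)|^2 = 2^{1+2s}\|f\|_{\dot H^s}^2 .
$$
For the $L^2$ part, the collection $\{h_{I_-}-h_{I_+}\}_{I\in\mathcal D}$ is orthogonal, since the intervals involved are distinct for distinct $I$. Each element has norm $\sqrt2$, so $\|\Sha f\|_{L^2}^2 = 2\sum_I|(f,h_I)|^2\le 2\|f\|_{L^2}^2$. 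Combining the two estimates gives $\|\Sha f\|_{H^s}\lesssim\|f\|_{H^s}$.

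To make this rigorous for a general $f\in H^s$, I first prove the identities for finite Haar sums and then extend by density, using that the defining series of $\Sha$ converges in $L^2$. There is no real obstacle here. The only point needing care is that each $J\neq I_0$ has exactly one parent, so the reindexing $J\mapsto \hat J$ is a bijection from $\mathcal D\setminus\{I_0\}$ onto $\mathcal D$ up to the factor two. The shift loses only the harmless factor $2^{2s}$ because the scale changes by exactly one generation.
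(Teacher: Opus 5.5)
Your proposal is correct and takes essentially the same route as the paper: note $\langle \Sha f\rangle_{I_0}=0$, compute $(\Sha f,h_J)=\pm(f,h_{\widehat{J}})$ for $J\neq I_0$, and reindex over parents, losing only a constant. Your bookkeeping is slightly more careful than the paper's: each parent has two children, so the constant is $2^{1+2s}$ rather than the $2^{2s}$ written there. Your separate $L^2$ estimate is redundant given the norm equivalence you invoke, but it is harmless.
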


\begin{proof}
    Let $f\in H^s$. Clearly, $\langle\Sha f\rangle_{I_0} = 0$, so
    \begin{align*}
        \| \Sha f \|_{H^s}^2 &\approx \| D^s(\Sha f)\|_{L^2}^2 + \langle \Sha f\rangle_{I_0}^2 \\
        & = \sum_{I\in \mc D} |I|^{-2s} |(\Sha f , h_I)|^2 \\
        & = \sum_{\substack{I\in \mc D\\ I \neq I_0}} \bigg(\frac{|\widehat{I}|}{2} \bigg)^{-2s} |( f , h_{\widehat{I}})|^2 \\ 
        & = 2^{2s}\sum_{\widehat{I}\in \mc D} |\widehat{I}| ^{-2s} |( f , h_{\widehat{I}})|^2 = 2^{2s} \|D^sf\|_{L^2}^2 \leq 2^{2s} \|f\|_{H^s}^2,
    \end{align*}
    where $\widehat{I}$ denotes the dyadic parent of $I$.
\end{proof}

\begin{proof}[Proof of Theorem \ref{CommutatorCorollary}]
We apply the decomposition \eqref{BonyDecomposition} to write
\begin{align*}
    [b,\Sha]f = [\Pi_b,\Sha] f + [\Lambda_b, \Sha]f+ [\widetilde{\Pi}_b, \Sha]f,
\end{align*}
where $\Lambda_bf = \Pi_f b$. We analyze these three terms separately. For $[\Pi_b,\Sha] f$, since $b\in \text{BMO}^s$, by Theorem \ref{Thm: ParaproductInequality} and Proposition \ref{Prop:Haar_shifts_bddness}, we get
\begin{align*}
    \|[\Pi_b,\Sha] f\|_{H^s} & \leq \|\Pi_b(\Sha f)\|_{H^s} + \|\Sha (\Pi_b f)\|_{H^s} \\
    & \lesssim \|b\|_{\text{BMO}^s} \|\Sha f\|_{H^s} + \|\Pi_b f\|_{H^s} \\
    & \lesssim \|b\|_{\text{BMO}^s} \| f\|_{H^s} + \|b\|_{\text{BMO}^s} \| f\|_{H^s} \approx \|b\|_{\text{BMO}^s} \| f\|_{H^s}.
\end{align*}
Similarly, for $[\widetilde{\Pi}_b, \Sha]f$, using Proposition \ref{AdjointParaproduct} and Proposition \ref{Prop:Haar_shifts_bddness}, we have
$$
    \|[\widetilde{\Pi}_b, \Sha]f\|_{H^s} \leq \|\widetilde{\Pi}_b(\Sha f)\|_{H^s} + \|\Sha(\widetilde{\Pi}_b f)\|_{H^s} \lesssim \|b\|_{\text{BMO}^s} \| f\|_{H^s}.
$$

To handle the remaining term, let $\mc D_l:=\{I_-\colon I\in \mc D\}$ and similarly, $\mc D_r :=\{I_+\colon I\in \mc D\}$. We now write
\begin{align*}
    [\Lambda_b, \Sha]f & = \Pi_{\Sha f}b - \Sha (\Pi_f b) \\
    & = \sum_{I\in \mc D} (\Sha f, h_I) \la b\ra_I h_I - \sum_{I\in \mc D} (\Pi_fb, h_I)(h_{I_-}-h_{I_+}) \\
    & = \sum_{J\in \mc D_l} (f,h_{\widehat{J}}) \la b\ra_J h_J - \sum_{J \in \mc D_r} (f,h_{\widehat{J}}) \la b\ra_J h_J - \sum_{I\in \mc D} (f,h_I) \la b \ra_I h_{I_-} + \sum_{I\in \mc D} (f,h_I) \la b \ra_I h_{I_+} \\
    & = \sum_{J\in \mc D_l} (f,h_{\widehat{J}}) \la b\ra_J h_J - \sum_{J \in \mc D_r} (f,h_{\widehat{J}}) \la b\ra_J h_J - \sum_{J\in \mc D_l} (f,h_{\widehat{J}}) \la b\ra_{\widehat{J}} h_J + \sum_{J \in \mc D_r} (f,h_{\widehat{J}}) \la b\ra_{\widehat{J}} h_J \\
    &= \sum_{J\in \mc D_l} (f,h_{\widehat{J}}) (\la b\ra_J-\la b\ra_{\widehat{J}}) h_J + \sum_{J \in \mc D_r} (f,h_{\widehat{J}})(\la b\ra_{\widehat{J}} -\la b\ra_J)h_J.
\end{align*}
To estimate the norm, we use the fact that $\langle[\Lambda_b, \Sha]f\rangle_{I_0}=0$, the above identity for $[\Lambda_b,\Sha]f$, and the fact that $|(b,h_I)|^2\leq\|b\|_{\text{BMO}_s}^2|I|^{2s}\text{Cap}_s(I)$ for $I \in \mathcal{D}$ to see
\begin{align*}
    \|[\Lambda_b, \Sha]f\|_{H^s}^2 & = \| D^s([\Lambda_b, \Sha]f)\|_{L^2}^2 + \langle[\Lambda_b, \Sha]f\rangle_{I_0} \\
    & = \sum_{I\in \mc D} |I|^{-2s}|(\Pi_{\Sha f}b - \Sha (\Pi_f b),h_I)|^2 \\
    & = \sum_{I\in \mc D\in\setminus\{I_0\}} |I|^{-2s} |(f,h_{\widehat{I}})|^2|\la b\ra_{\widehat{I}} -\la b\ra_I|^2 \\
    & \approx \sum_{I\in \mc D\setminus\{I_0\}} |\widehat{I}|^{-2s} |(f,h_{\widehat{I}})|^2 \bigg|\frac{1}{|\widehat{I}|} \int_{\widehat{I}}b - \frac{2}{|\widehat{I}|} \int_{I}b  \bigg|^2 \\
    & = \sum_{I\in \mc D\setminus\{I_0\}} |\widehat{I}|^{-2s-1} |(f,h_{\widehat{I}})|^2 |(b,h_{\widehat{I}})|^2 \\
    & \leq \|b\|_{\text{BMO}^s}^2\sum_{I\in \mc D} |I|^{-1} \capac_s(I)|(f,h_I)|^2.
\end{align*}
For $\frac{1}{2}\leq s < 1$, we use that $\capac_s(I) \lesssim 1$ and $|I|^{-1}\leq |I|^{-2s}$ to get
$$
    \|[\Lambda_b, \Sha]f\|_{H^{s}}^2 \lesssim  \|b\|_{\text{BMO}^{s}}^2 \|f\|_{H^{s}}^2.
$$
When $s<\frac{1}{2}$, we use Lemma \ref{Cap(I)_estimate} to see $\text{Cap}_s(I)\approx |I|^{1-2s}$ and get
$$
     \|[\Lambda_b, \Sha]f\|_{H^s}^2  \lesssim  \|b\|_{\text{BMO}^s}^2 \sum_{I\in \mc D} |I|^{-2s} |(f,h_I)|^2 \leq \|b\|_{\text{BMO}^s}^2 \|f\|_{H^s}^2.
$$
This completes the boundedness portion of the result.

We now address the compactness of $[b,\Sha]$ on $H^s$ when $b \in \text{CMO}_s$. Clearly, the two terms $[\Pi_b,\Sha]$ and $[\widetilde{\Pi}_b,\Sha]$ are compact on $H^s$ for any $s \in (0,1)$ since they are sums of compositions of compact operators with bounded operators by Theorem \ref{Thm: ParaproductInequality}, Proposition \ref{AdjointParaproduct}, and Proposition \ref{Prop:Haar_shifts_bddness}. To see that $[\Lambda_b,\Sha]$ is compact on $H^s$, we will show that 
$$
    \lim_{N\rightarrow \infty}\sup_{\substack{f \in H^s\\ \|f\|_{H^s}\leq 1}} \bigg\|\sum_{I \in \mathcal{D}_N^c} ([\Lambda_b,\Sha]f,h_I)h_I\bigg\|_{H^s} = 0.
$$
Let $f\in H^s$ with $\|f\|_{H^s}\leq 1$ and $\epsilon >0$. By the $\text{CMO}_s$ condition of $b$, there exists $N\in \mathbb{N}$ such that $|(b,h_I)|^2< \epsilon |I|^{2s} \capac_s(I)$, for all $I \in \mc D_N^c$. Following the same computations as above, we have
\begin{align*}
    \sum_{I \in \mathcal{D}_N^c} ([\Lambda_b,\Sha]f,h_I)h_I = \sum_{\substack{I\in \mc D_l \\ I \in  \mathcal{D}_N^c}} (f,h_{\widehat{I}}) (\la b\ra_I-\la b\ra_{\widehat{I}}) h_I + \sum_{\substack{I\in \mc D_r \\ I \in  \mathcal{D}_N^c}} (f,h_{\widehat{I}})(\la b\ra_{\widehat{I}} -\la b\ra_J)h_I.
\end{align*}
Therefore, as before, 
\begin{align*}
    \bigg\|\sum_{I \in \mathcal{D}_N^c} ([\Lambda_b,\Sha]f,h_I)h_I\bigg\|_{H^s}^2 & \lesssim \sum_{I\in \mathcal{D}_N^c} |I|^{-2s-1} |(f,h_I)|^2 |(b, h_I)|^2 \\
    & <\epsilon \sum_{I\in \mathcal{D}_N^c} |I|^{-1}\capac_s(I) |(f,h_I)|^2.
\end{align*}
For $\frac{1}{2}\leq s <1$, we use $\capac_s(I) \lesssim 1$ and $|I|^{-1} \leq |I|^{-2s}$ to get
$$
    \bigg\|\sum_{I \in \mathcal{D}_N^c} ([\Lambda_b,\Sha]f,h_I)h_I\bigg\|_{H^s}^2  \lesssim \epsilon \sum_{I\in \mathcal{D}_N^c} |I|^{-1}|(f,h_I)|^2 \leq  \epsilon\|f\|_{H^{s}}^2 \leq \epsilon.
$$
When $s<\frac{1}{2}$, we use Lemma \ref{Cap(I)_estimate} to see $\text{Cap}_s(I) \approx |I|^{1-2s}$ and get
$$
    \bigg\|\sum_{I \in \mathcal{D}_N^c} ([\Lambda_b,\Sha]f,h_I)h_I\bigg\|_{H^s}^2  \lesssim \epsilon  \sum_{I\in \mc D} |I|^{-2s} |(f,h_I)|^2  \leq \epsilon \|f\|_{H^s}^2 \leq \epsilon,
$$
concluding the proof.
\end{proof}

\begin{rem}
    The boundedness portion of Theorem \ref{CommutatorCorollary} in the high-regularity range $s\in (\frac{1}{2},1)$ can also be derived as a consequence of the algebra property of Theorem \ref{AlgebraCorollary}. Indeed, for $f\in H^s$ and $b \in BMO^s$, using Theorem \ref{AlgebraCorollary}, Proposition \ref{Prop:BMO^s=dotH^s} and Proposition \ref{Prop:Haar_shifts_bddness}, we get
\begin{align*}
    \|[b,\Sha]f \|_{H^s} & \leq \|b \Sha f\|_{H^s} + \|\Sha (bf)\|_{H^s} \\
    & \lesssim \|b\|_{H^s} \|\Sha f\|_{H^s} + 
    \|bf\|_{H^s} \\
    & \lesssim \|b\|_{H^s} \| f\|_{H^s}\\ 
    & \approx\|b\|_{\text{BMO}^s}  \| f\|_{H^s}.
\end{align*}
\end{rem}

\section*{Acknowledgments}
 The authors thank Mihalis Papadimitrakis for helpful discussions and recommendations.



\begin{bibdiv}
\begin{biblist}
\bib{AA15}{article} {
    AUTHOR = {Actis, M.},
    AUTHOR = {Aimar, H.},
     TITLE = {Dyadic nonlocal diffusions in metric measure spaces},
   JOURNAL = {Fract. Calc. Appl. Anal.},
    VOLUME = {18},
      YEAR = {2015},
    NUMBER = {3},
     PAGES = {762--788},
     review={\MR{3351499}}
}

\bib{AIMAR2013}{article}{
title = {On dyadic nonlocal Schrödinger equations with Besov initial data},
author = {H. Aimar},
author ={B. Bongioanni},
author = {I. Gómez},
journal = {J. Math. Anal. Appl.},
volume = {407},
number = {1},
pages = {23-34},
year = {2013},
review={\MR{3063102}}
}

\bib{DyadicSobolev}{article}{
author = {H. Aimar},
author = {J. Comesatti},
author = {I. G\'omez},
author = {L. Nowak},
year = {2023},
pages = {287--298},
title = {Partial derivatives, singular integrals and Sobolev Spaces in dyadic settings},
volume = {39},
journal = {Anal. Theory Appl.},
review={\MR{4645410}}
}

\bib{FAr2024}{article}{
title={Fractional Sobolev embeddings and algebra property: A dyadic view},
author={P. Alonso Ruiz},
author={V. Fragkiadaki},
journal={J. Math. Anal. Appl.},
date={2026},
volume={559},
number={2},
pages={Paper No. 130536},
review={\MR{5036513}}
}

\bib{FAr2025}{article}{
title={Dyadic fractional Sobolev spaces: Embeddings and algebra property},
author={P. Alonso Ruiz},
author={V. Fragkiadaki},
journal={arXiv:2511.14877},
date={2025}
}

\bib{Bony1981}{article}{
  title={Calcul symbolique et propagation des singularit{\'e}s pour les {\'e}quations aux d{\'e}riv{\'e}es partielles non lin{\'e}aires},
  author={J. M. Bony},
  journal={Ann. Sci. \'Ecole Norm.},
  year={1981},
  volume={14},
  pages={209-246},
  review={\MR{0631751}}
}

\bib{CM1978}{book}{
    title={Au-del\`a des op\'erateurs pseudo-diff\`erentiels},
    author={R. Coifman},
    author={Y. Meyer},
    series={Ast\'erisque, 57},
    publisher={Soci\'et\'e Math. de France},
    year={1978, i+185 pp.},
    address={Paris},
    review={\MR{0518170}}
}

\bib{CM1988}{article}{
title={Commutators on the potential-theoretic energy spaces},
author={R. R. Coifman},
author={T. Murai},
journal={Tohoku Math. J. (2)},
volume={40},
date={1988},
number={3},
pages={397--407},
review={\MR{0957051}}
}

\bib{CW1977}{article}{
title={Extensions of Hardy spaces and their use in analysis},
author={R. Coifman},
author={G. Weiss},
journal={Bull. Amer. Math. Soc},
volume={83},
date={1977},
number={4},
pages={569--645},
review={\MR{0447954}}
}

\bib{DPGW2024}{article}{
    title={Multilinear paraproducts on Sobolev spaces}, 
    author={F. Di Plinio}, 
    author= {A. W. Green},
    author= {B. D. Wick},
    year={2025},
    journal={Boll. Unione Mat. Ital.},
    volume={18},
    number={1},
    pages={167--183},
    review={\MR{4871885}}
}

\bib{DPGW2023}{article}{
    author={F. Di Plinio}, 
    author= {A. W. Green},
    author= {B. D. Wick},
    year = {2023},
    title = {Wavelet resolution and Sobolev regularity of Calder\'on-Zygmund operators on domains},
    journal={arXiv:2304.13909}
}

\bib{GJ1982}{article}{
title={BMO from dyadic BMO},
author={J. B. Garnett},
author={P. W. Jones},
journal={Pacific J. Math.},
volume={99},
date={1982},
number={2},
pages={351--371},
review={\MR{0658065}}
}

\bib{H2012}{article}{
title={The sharp weighted bound for general Calder\'on-Zygmund operators},
author={T. P. Hyt\"onen},
journal={Ann. of Math. (2)},
volume={175},
date={2012},
number={3},
pages={1473--1506},
review={\MR{2912709}}
}

\bib{KinnunenBook21}{book}{
title = {Maximal Function Methods for Sobolev Spaces},
author={J. Kinnunen}, 
author ={J. Lehrbäck}, 
author = {A. Vähäkangas},
publisher = {American Mathematical Society},
address={Providence, RI},
year={2021, xii+338 pp.},
review={\MR{4306765}}
}

\bib{M2003}{article}{
title={BMO is the intersection of two translates of dyadic BMO},
author={T. Mei},
journal={C. R. Math. Acad. Sci. Paris},
volume={336},
date={2003},
number={12},
pages={1003--1006},
review={\MR{1993970}}
}

\bib{P2000}{article}{
title={Dyadic shifts and a logarithmic estimate for Hankel operators with matrix symbol},
author={S. Petermichl},
journal={C. R. Acad. Sci. Paris S\'er. I Math.},
volume={330},
date={2000},
number={6},
pages={455--460},
review={\MR{1756958}}
}

\bib{P2007}{article}{
title={The sharp bound for the Hilbert transform on weighted Lebesgue spaces in terms of the classical $A_p$ characteristic},
author={S. Petermichl},
journal={Amer. J. Math.},
volume={129},
date={2007},
number={5},
pages={1355--1375},
review={\MR{2354322}}
}

\bib{S1975}{article}{
title={Functions of vanishing mean oscillation},
author={D. Sarason},
journal={Trans. Amer. Math. Soc.},
volume={207},
date={1975},
pages={391--405},
review={\MR{0377518}}
}

\bib{W1999}{article}{
title={Strong type estimate and Carleson measures for Lipschitz spaces},
author={Z. Wu},
journal={Proc. Amer. Math. Soc.},
volume={127},
date={1999},
number={11},
pages={3243--3249},
review={\MR{1637452}}
}
\end{biblist}
\end{bibdiv}
\end{document}